\newcommand{\fdd}{\stackrel{\mathrm{fdd}}{\longrightarrow}}
\newcommand{\tendsas}{\stackrel{a.s}{\longrightarrow}}
\newcommand{\tends}[1]{\xrightarrow[#1]{}}
\newcommand{\tendsd}{\xrightarrow{\ d\ }}
\newcommand{\eqd}{\stackrel{d}{=}}
\def\d{\hbox{\rm d}}
\DeclareMathOperator{\Var}{D}
\DeclareMathOperator{\Cov}{Cov}
\DeclareMathOperator{\Prob}{P} 
\DeclareMathOperator{\Mean}{E}
\newcommand{\abs}[1]{\lvert #1\rvert}
\newcommand{\Abs}[1]{\bigl| #1\bigr|}
\newcommand{\Nd}{\mathbb{N}}
\newcommand{\Rd}{\mathbb{R}}
\newcommand{\Zd}{\mathbb{Z}}
\newcommand{\calB}{\mathcal{B}}
\newcommand{\calF}{\mathcal{F}}
\newcommand{\calN}{\mathcal{N}}
\newcommand{\e}{\mathrm{e}}
\newtheorem{thm}{Theorem}
\newtheorem{lem}[thm]{Lemma}
\newtheorem{prop}[thm]{Proposition}
\newtheorem{dfn}[thm]{Definition}
\newtheorem{rem}[thm]{Remark}
\newtheorem{cor}[thm]{Corollary}
\newcommand{\bydef}{\stackrel{\mathrm{def}}{=}}
\newcommand{\imply}{\Rightarrow}
\title{On a covariance structure of some subset of self-similar Gaussian processes}
\author{V. Skorniakov}
\begin{document}
\maketitle

\begin{abstract}
We introduce a class of self-similar Gaussian processes and provide sufficient and necessary conditions for a member of the class
to admit a unique small scale limit in \(D[0;\infty)\). The class includes several well known processes. An example of application to the problem of estimation is given.
\end{abstract}

\section{Introduction}

Let \(\gamma\in(0;1),\sigma\in(0;\infty),l:[0;\infty)\to\Rd\) be fixed. Assume that \(l\) is measurable and that \(l(0)=1\). In this paper we consider some limit property of a centered Gaussian process $(X_t)_{t\geq 0}$ with $X_0\equiv 0$ and a covariance function given by\footnote{$l$ must also satisfy additional constraint imposed by positive definiteness of $R$;}\footnote{since $X_0\equiv 0$, $R(s,t)\equiv 0$ for $s \wedge t=0$}

\begin{equation}\label{l:coveq}
	R(s,t)=\sigma^2(s\wedge t)^{2\gamma}l\left(\frac{\vert s-t\vert}{s\wedge t}\right),s\wedge t>0.
\end{equation}
Several particularly well known examples admitting such representation are the following:
\begin{itemize}
\item sub-fractional Brownian motion (sfBm) $(S_t^H)$ with
\begin{align}
	R(s,t) = s^{2H}+t^{2H}-\frac{1}{2}\left[(s+t)^{2H}-\vert s-t\vert^{2H}\right],\gamma=H\in(0;1),\sigma^2=2-2^{2H-1},\nonumber\\
    l(u)=(2-2^{2H-1})^{-1}\left(1+(1+u)^2-\frac{1}{2}\left((2+u)^{2H}+u^{2H}\right)\right);\label{l:lOfsfbm}
\end{align}
\item bi-fractional Brownian motion (bfBm) $(B_t^{H,K})$ with $H\in(0;1),K\in(0;1]$,
\begin{align}
	R(s,t) = 2^{-K}\left(\left(s^{2H}+t^{2H}\right)^{K}-\vert s-t\vert^{2HK}\right),\gamma=HK\in(0;1),\sigma^2=1,\nonumber\\
    l(u)=2^{-K}\left(\left(1+(1+u)^{2H}\right)^{K}-u^{2HK}\right).\label{l:lOfbfbm}
\end{align}
\item Riemann Liouville process (RL) $(RL_t^{H})$ with,
\begin{align}
	R(s,t) = \frac{\int_0^{s \wedge t}\left((t-v)(s-v)\right)^{H-1/2} \d v}{\Gamma^{2} \left(H+\frac{1}{2}\right)}, \gamma = H \in (0;1),\sigma^2=\frac{1}{2H\Gamma^{2} \left(H+\frac{1}{2}\right)},\nonumber\\
    l(u)=2H \int_0^1 \left((v+u)v\right)^{H-1/2} \d v.\label{l:lOfRL}
\end{align}
\end{itemize}
Popularity of the families of processes above\footnote{to gain some insight of its magnitude, we offer to track the number of citing articles of the following short list of references: \cite{Bojdecki04}, \cite{NualartLei09},
\cite{Russo06}, \cite{Tudor07subFbm}, \cite{Tudor07bfBm} \cite{houdre02}, \cite{Robinson99}; for all but one the mentioned numbers obtained from Scopus at the date of submission are indicated in the list of references given at the end of the article}  was the main source of inspiration of our study. Some other are explained below. 

First of all, note that $R$ given by \eqref{l:coveq} defines a self-similar Gaussian process. The only self-similar Gaussian process with stationary increments is the fractional Brownian motion $(B_{t}^{H,1})$ (further on denoted as $(B_t^H)$). Hence, the class under consideration corresponds to Gaussian processes with non-stationary increments and in certain cases covariance suitable for modeling of long range dependence. Therefore it is interesting from both practical and theoretical point of view. Secondly, the structure of $R$ is completely determined by the self-similarity parameter $\gamma$ and function $l$. It is clear thus, that different properties of the members of the class could be expressed in terms of the analytic properties of $l$ and the restrictions on the range of $\gamma$. Since $l$ depends on a single variable, such characterization appeals to be well suited for applications giving the other reason for investigations.

To describe the purpose of the current paper, recall a concept of a small scale limit introduced in \cite{dobrushin80}. We say that a process $X=(X_t)_{t\geq0}$ admits a small scale limit ({ssl}) at $t_0\in[0;\infty)$, whenever there exists a normalization $a_{t_0}:(0;\infty)\to(0;\infty), a_{t_0}(u)\to 0+0,u\to 0+0$, and a process $Y^{t_0}=(Y_\tau^{t_0})_{\tau\geq0}$, such that
\begin{equation}\label{l:sslim}
	\left(\frac{X_{t_0}-X_{t_0+\tau u}}{a_{t_0}(u)}\right)\fdd (Y^{t_0}_\tau)_{\tau\geq0},
\end{equation}
where {fdd} stands for a convergence of finite dimensional distributions. It is needless to say that an existence of such limit is a favorable property admitting both practical and theoretical applications. Therefore present paper is devoted to the problem of this type. To be more precise, we provide sufficient and necessary conditions on $l$ ensuring that $X$ admits the small scale limit at each $t\geq0$. Moreover, it turns out that self-similarity, which is present in our case, enables to replace {fdd} convergence above by the stronger one, namely, weak convergence in the Skorohod space $D[0;\infty)$ (for details on this type of convergence consult subsection \ref{ss:weakConv}).

The paper is organized as follows. Section \ref{s:results} contains statement of the main result along with several examples of applications implied by an existence of {ssl}. Section \ref{s:facts} is a collection of auxiliary statements and definitions needed for the proofs. The latter are given in section \ref{s:proofs}.

\section{Results}\label{s:results}
Our main result is contained in the first two theorems given below. Before proceeding to the statement we provide several comments regarding the notions.

\begin{itemize}
\item Whenever it is possible and no confusion occurs, we omit time argument for the process and denote it by a single letter, e.g. $X$ is used instead of $(X_t)_{t\ge 0}$. The time argument always appears as a lower subscript with an upper ones left for the parameters upon which the process depends.
\item In all the rest part of the paper $\tendsd$ denotes weak convergence in $D[0;\infty)$ (see subsection \ref{ss:weakConv} for details) when used with a process type arguments. In case of random variables it denotes a common weak convergence. $\calF_D$ denotes the set of random elements of $D[0;\infty)$.
\item Though indirectly, it was already mentioned that the fBm $B^H,H\in(0;1)$, is obtained from $B^{H,K}$ by taking $K=1$. Consequently,
\begin{equation*}
	R_{B^H}(s,t)=\frac{1}{2}\left(s^{2H}+t^{2H}-\vert t-s\vert^{2H}\right).
\end{equation*}
It is convenient to extend this notion and allow $H$ attain value 1. In such case $B^1$ is defined by
\begin{equation*}
	B^1_t=tZ, \ Z\sim\mathcal{N}(0;1),t\geq 0.
\end{equation*}
It is obvious that $\lim\limits_{H\to 1-0}R_{B^H}=R_{B^1}$. The latter relationship justifies introduced extension.
\item Let $f:[0;A]\to\Rd$ for some $A \in(0;\infty)$. Then
\begin{gather*}
	\Delta f_{t,u}=f(t+u)-f(t),\Delta^{(2)}f_{t,u}=\Delta f_{t+u,u}-\Delta f_{t,u}=f(t+2u)-2f(t+u)+f(t)
\end{gather*}
provided $t,u \geq 0$ are such that $t+2u \in[0;A]$.
\item for any real valued $f,g$ notion $f \sim g,u \to u_0 $, means that $f(u)=g(u)(1+o(1)), u \to u_0$; the same applies to one sided limits.
\end{itemize}

\begin{thm}\label{t:main1}
Let $(X_t)_{t \geq 0}$ be Gaussian with a covariance defined by \eqref{l:coveq} and let $l(u)=\frac{1}{2}\left(1+(1+u)^{2 \gamma}-\left(u^\kappa L(u)\right)^2 \right)$ with some fixed $\kappa \in(0;1]$ and $L:(0;\infty)\to (0; \infty)$ slowly varying at zero. Fix $u,t \in(0;\infty)$ and define a random process $(Z^{t,u}_\tau)_{\tau \geq 0}$ by
\begin{equation*}
	Z_\tau^{t,u}=\frac{X_t-X_{t+u \tau}}{u^\kappa L(u)},\tau \geq 0.
\end{equation*}
Then $Z^{t,u}\tendsd c_t B^{\kappa}, u \to 0+0$, where $c_t=\sigma t^{\gamma-\kappa}$.
\end{thm}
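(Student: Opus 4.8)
The plan is to exploit that both \(Z^{t,u}\) and the limit \(c_t B^\kappa\) are centered Gaussian processes, so the statement splits into (i) convergence of finite-dimensional distributions, which for centered Gaussian families is equivalent to pointwise convergence of covariance functions, and (ii) tightness in \(D[0;\infty)\). First I would record the single identity on which everything rests. Writing out \(R\) for the prescribed \(l\) and using \(t^{2\gamma}(1+h/t)^{2\gamma}=(t+h)^{2\gamma}\), the two non-singular terms \(1\) and \((1+\cdot)^{2\gamma}\) appearing in \(2R(t,t+h)\) cancel against \(R(t,t)\) and \(R(t+h,t+h)\), leaving for all \(t,h>0\)
\[
	\Var\bigl(X_t-X_{t+h}\bigr)=\sigma^2 t^{2\gamma-2\kappa}h^{2\kappa}L(h/t)^2.
\]

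For the finite-dimensional distributions I would apply this with \(h=u\tau\) to get \(\Var(Z^{t,u}_\tau)=\sigma^2 t^{2\gamma-2\kappa}\tau^{2\kappa}L(u\tau/t)^2/L(u)^2\), which converges to \(c_t^2\tau^{2\kappa}\) as \(u\to0+0\) since \(L\) is slowly varying at zero and hence \(L(u\tau/t)/L(u)\to1\). The cross terms I would obtain by polarization: for \(\tau<s\),
\[
	\Cov\bigl(X_t-X_{t+u\tau},\,X_t-X_{t+us}\bigr)=\tfrac12\Bigl[\Var(X_t-X_{t+u\tau})+\Var(X_t-X_{t+us})-\Var(X_{t+u\tau}-X_{t+us})\Bigr],
\]
the last variance being handled by the same identity with base point \(t+u\tau\) and increment \(u(s-\tau)\). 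Dividing by \((u^\kappa L(u))^2\), sending \(u\to0+0\), and using \((t+u\tau)^{2\gamma-2\kappa}\to t^{2\gamma-2\kappa}\) together with slow variation, the limit is \(\tfrac{c_t^2}{2}(\tau^{2\kappa}+s^{2\kappa}-|\tau-s|^{2\kappa})\), exactly the covariance of \(c_t B^\kappa\). This gives the fdd convergence.

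To upgrade to weak convergence in \(D[0;\infty)\) I would argue tightness through the Gaussian moment structure. Since \(Z^{t,u}_s-Z^{t,u}_\tau=(X_{t+us}-X_{t+u\tau})/(u^\kappa L(u))\) is Gaussian, every \(2p\)-th moment is a fixed multiple of the \(p\)-th power of its variance, so the identity yields, for \(\tau<s\),
\[
	\Mean\bigl|Z^{t,u}_s-Z^{t,u}_\tau\bigr|^{2p}=C_p\Bigl(\sigma^2(t+u\tau)^{2\gamma-2\kappa}|s-\tau|^{2\kappa}\,L\bigl(u(s-\tau)/(t+u\tau)\bigr)^2/L(u)^2\Bigr)^p.
\]
Choosing \(p\) with \(2\kappa p>1\) and controlling the slowly varying ratio by Potter's bounds produces a Kolmogorov--Chentsov estimate uniform in small \(u\) on each interval \([0;T]\), hence tightness in \(C[0;T]\subset D[0;T]\); since the limit \(c_tB^\kappa\) has continuous paths, weak convergence in \(D[0;\infty)\) then follows from its reduction to weak convergence in each \(D[0;T]\).

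The main obstacle is exactly this uniform control of the factor \(L\bigl(u(s-\tau)/(t+u\tau)\bigr)/L(u)\). Near the diagonal \(s\approx\tau\) its argument is a vanishing multiple of \(u\), so bare slow variation gives only pointwise and not uniform convergence; the remedy is a two-sided Potter bound \(L(x)/L(y)\le K\max\{(x/y)^{\delta},(x/y)^{-\delta}\}\) valid for all small \(x,y\), applied with \(\delta<\kappa\) so that \(|s-\tau|^{2\kappa}\) absorbs the \(|s-\tau|^{-2\delta}\) blow-up while keeping the exponent positive. Verifying that this bound holds uniformly as \(u\to0+0\) (both arguments becoming small simultaneously) is where the bulk of the technical care is needed; the remaining passage from \(D[0;T]\) to \(D[0;\infty)\) and the fdd step are routine.
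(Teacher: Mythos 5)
Your proposal is correct, and its fdd half is essentially the paper's own argument: the paper likewise computes the covariance of the rescaled increments explicitly from $l$ and invokes the uniform convergence theorem for slowly varying functions (theorem \ref{t:aux5_rv_on_compacts}), the only cosmetic difference being that the paper first reduces to $t=1$ via self-similarity while you carry the general $t$ through the computation (your variance identity $\Var(X_t-X_{t+h})=\sigma^2t^{2\gamma-2\kappa}h^{2\kappa}L^2(h/t)$ is exactly what the paper uses implicitly in its Step 2). Where you genuinely diverge is the upgrade to weak convergence in $D[0;\infty)$. The paper verifies Pollard's criterion (theorem \ref{t:aux1_weak_conv}(ii)) directly in $D$: it bounds the oscillation functional $\Delta(Z^{u_n},[c;d])$ by excursion probabilities of the centered Gaussian processes $\xi^{n,c,d}$, controlled via a metric-entropy count $N(\tilde\epsilon)\le(A/\tilde\epsilon)^{2/\kappa}$ and the Adler--Taylor estimate (theorem \ref{t:aux4_ex_prob_bound}), with Potter's bounds (theorem \ref{t:aux6_rv_potter_bounds}) supplying the same uniform control of the ratio $L\bigl(\frac{(t-c)u_n}{1+u_nc}\bigr)/L(u_n)$ that you identify as the technical crux. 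You instead combine the Gaussian moment identity with Potter's bounds at exponent $\delta<\kappa$ to get a Kolmogorov--Chentsov estimate $\Mean\abs{Z^{t,u}_s-Z^{t,u}_\tau}^{2p}\le C\abs{s-\tau}^{(2\kappa-2\delta)p}$, uniform in small $u$ (both Potter arguments $u$ and $u(s-\tau)/(t+u\tau)$ fall below the threshold uniformly for $0\le\tau<s\le T$ once $u$ is small, so this does check out), then choose $p>\frac{1}{2(\kappa-\delta)}$ for tightness in $C[0;T]$ and push forward to $D[0;T]$ and $D[0;\infty)$ using continuity of the limit. Both routes are sound; yours is more elementary, avoids the entropy/excursion machinery, and in fact delivers the stronger convergence in $C$ on compacts, while the paper's argument works intrinsically in $D$ and its excursion-probability bound yields explicit tail estimates for the oscillation. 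One point you share with the paper and should state explicitly: $X$ is specified only through its covariance, so both tightness arguments require passing to a continuous modification, whose existence follows from the very same variance bound (in the paper this is implicit in the appeal to a.s.\ continuity at the start of its Step 1).
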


\begin{thm}\label{t:main2}
Let $(X_t)_{t \geq 0}$ be Gaussian with a covariance defined by \eqref{l:coveq}. Assume that for all $t \in(0;\infty)$ there exist random process\footnote{$A>0$ is assumed to be fixed; its value is irrelevant since it suffices to have $a_t$ defined in some neighborhood of 0} $(Y_\tau^t)_{\tau \geq 0}$ and $a_{t}:(0;A)\to (0;\infty)$ such that:
\begin{itemize}
\item[(y1)] $Y_1^t$ is non-degenerate;
\item[(y2)] $Y^t \in \calF_D$;
\item[(a1)] $a_t(u)\tends{u \to 0+0}0+0$;
\item[(a2)] $\left(\frac{X_t-X_{t+ \tau u}}{a_t(u)}\right)_{\tau \geq 0}\tendsd Y^t, u \to 0+0$.
\end{itemize}
Then there exist $\kappa \in(0;1]$ and $L:(0;\infty)\to [0;\infty)$ such that
\begin{itemize}
\item[(i)] $L$ is slowly varying at zero;
\item[(ii)] $a_t(u)\sim c_t u^{\kappa}L(u),u \to 0+0$, with $c_t=\sigma t^{\gamma-\kappa} \sqrt{\frac{\Var{ Y_1^1}}{\Var{ Y_t^1}}}$;
\item[(iii)] $l(u)=\frac{1}{2}\left(1+(1+u)^{2 \gamma} - (u^{\kappa}L(u))^{2} \right),u>0$;
\item[(iv)] $\forall t \ Y^t$ is a constant multiple of $B^\kappa$ and it is a unique\footnote{up to a constant multiplier} small scale limit of $X$ at $t$.
\end{itemize}
\end{thm}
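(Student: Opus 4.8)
The plan is to push everything onto the single function $\phi(w):=1+(1+w)^{2\gamma}-2l(w)$, $w>0$, which by \eqref{l:coveq} and $l(0)=1$ governs all increment variances: a direct computation gives $\Var(X_t-X_{t+\tau u})=\sigma^2t^{2\gamma}\phi(\tau u/t)$ for $t,\tau,u>0$. Since $Z^{t,u}:=\bigl((X_t-X_{t+\tau u})/a_t(u)\bigr)_{\tau\ge0}$ is centered Gaussian, (a2) forces $Y^t$ to be centered Gaussian (finite-dimensional limits of Gaussians are Gaussian), and convergence in $D[0;\infty)$ yields convergence of the covariances at the continuity points of $Y^t$ --- all $\tau$, since $Y^t$ will turn out continuous. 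In particular $\sigma^2t^{2\gamma}\phi(\tau u/t)/a_t(u)^2\to\Var(Y^t_\tau)=:r_t(\tau)<\infty$ for every $\tau>0$, with $r_t(1)=:v_t>0$ by (y1); and since $v_t<\infty$ while $a_t(u)\to0$ by (a1), one gets $\phi(0+)=0$.

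Dividing the relation for $\tau$ by the one for $\tau=1$ and writing $w=u/t$ gives $\phi(\tau w)/\phi(w)\to\psi(\tau):=r_t(\tau)/v_t$ as $w\to0+$, for each $\tau>0$. The left-hand side is independent of $t$, so $\psi$ is well defined and measurable with $\psi(1)=1$; factorising $\phi(\tau\rho w)/\phi(w)$ through $\phi(\rho w)$ shows $\psi(\tau\rho)=\psi(\tau)\psi(\rho)$, whence $\psi>0$ everywhere and, by the characterization theorem for regularly varying functions, $\psi(\tau)=\tau^{2\kappa}$ and $\phi(w)=w^{2\kappa}\ell(w)$ with $\ell$ slowly varying at $0$. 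Setting $L:=\sqrt{\ell}$ gives (i) and $l(w)=\tfrac12\bigl(1+(1+w)^{2\gamma}-(w^\kappa L(w))^2\bigr)$, i.e. (iii). The index is nonnegative because $\phi(0+)=0$, and the value $0$ is excluded by (y2): it would make $r_t$ a positive constant, contradicting $Y^t_0=0$ and right-continuity of $Y^t$ at $0$; hence $\kappa>0$.

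To identify $Y^t$ and bound $\kappa$ from above I would compute the entire limiting covariance. The increment $Y^t_\tau-Y^t_\rho$ is the limit of $(X_{t+\rho u}-X_{t+\tau u})/a_t(u)$; applying the variance formula at the shifted base point $t+\tau u$ and invoking the uniform convergence theorem for regularly varying functions --- to replace $\tfrac{(\rho-\tau)u}{t+\tau u}$ by $(\rho-\tau)u/t$ inside $\phi$ --- yields $\Var(Y^t_\tau-Y^t_\rho)=v_t|\tau-\rho|^{2\kappa}$. Polarising with $\Var(Y^t_\tau)=v_t\tau^{2\kappa}$ gives $\Cov(Y^t_\tau,Y^t_\rho)=\tfrac{v_t}{2}\bigl(\tau^{2\kappa}+\rho^{2\kappa}-|\tau-\rho|^{2\kappa}\bigr)$, so $Y^t\eqd\sqrt{v_t}\,B^\kappa$; positive definiteness of this covariance, which holds because $Y^t$ exists, forces $\kappa\le1$. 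With (iii) and $\kappa\in(0;1]$ in hand, Theorem \ref{t:main1} applies to $(X_t-X_{t+\tau u})/(u^\kappa L(u))$, giving the limit $\sigma t^{\gamma-\kappa}B^\kappa$; comparing normalisations shows $a_t(u)/(u^\kappa L(u))$ has a finite positive limit and that every admissible small scale limit is a constant multiple of $B^\kappa$.

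It remains to cast $c_t$ in the stated form and to settle uniqueness. The case $\tau=1$ of the variance relation yields $a_t(u)\sim\sigma t^{\gamma-\kappa}v_t^{-1/2}u^\kappa L(u)$, so $a_t(u)\sim c_tu^\kappa L(u)$; to re-express $v_t=\Var(Y^t_1)$ through $Y^1$ I would exploit the self-similarity $(X_{cs})_s\eqd(c^\gamma X_s)_s$ of $R$, which gives the identity $Z^{t,u}\eqd\tfrac{t^\gamma a_1(u/t)}{a_t(u)}Z^{1,u/t}$ and hence $Y^t\eqd\lambda_tY^1$ with $\lambda_t=\lim_{u\to0+}t^\gamma a_1(u/t)/a_t(u)$; together with $\Var(Y^1_\tau)=v_1\tau^{2\kappa}$ this recasts the constant into the form of (ii). Uniqueness in (iv) is then immediate, since $\kappa$ and $L$ are intrinsic to $X$ --- they are the regular-variation index and slowly varying part of $\phi$ --- so any normalisation is asymptotic to $c\,u^\kappa L(u)$ and the associated limit is the same multiple of $B^\kappa$. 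I expect the crux to be the passage to regular variation in the second step --- securing the measurability and the positivity $\psi>0$ required by the characterization theorem --- and the uniform-convergence estimate in the third step, where the moving base point $t+\tau u$ must be absorbed into the regularly varying $\phi$.
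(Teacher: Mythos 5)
Your proposal is correct in substance, but it takes a genuinely different route from the paper's. The paper converts the small-scale limit into a large-scale one via the time inversion $Z^t_\tau=X_t-X_{t+1/\tau}$ and invokes the Lamperti-type theorem \ref{t:aux8_ss_fdd} to obtain self-similarity of $Y^t$ and regular variation of $a_t$; it identifies $L$ from $\Var(X_1-X_{1+u})$ exactly as you do; but it obtains claim (iv) --- and with it the bound $\kappa\le 1$ --- from the tangent-process machinery: uniqueness of the tangent process at each $t$ by the same variance-comparison you use, then Falconer's dichotomy (theorem \ref{t:aux3_tangent1}) for $\lambda$-almost every $t$, upgraded to every $t$ by self-similarity of $X$. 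You instead extract regular variation of $\phi(w)=1+(1+w)^{2\gamma}-2l(w)$ directly through the multiplicative functional equation $\psi(\tau\rho)=\psi(\tau)\psi(\rho)$ and Karamata's characterization theorem --- in effect re-proving the fragment of theorem \ref{t:aux8_ss_fdd} that is actually needed, with the $b\ln\tau$ case never arising because all means vanish --- and you compute the full limit covariance (the same calculation as in part \emph{(i)} of the paper's proof of theorem \ref{t:main1}), so that $Y^t\eqd\sqrt{v_t}\,B^\kappa$ and $\kappa\le1$ follow from positive semidefiniteness, since $\Var\left(Y^t_2-2Y^t_1\right)=v_t\left(4-2^{2\kappa}\right)\ge 0$; Falconer's results and the a.e.-then-everywhere step are bypassed entirely, which makes your proof more elementary and self-contained, whereas the paper buys brevity by outsourcing to cited theorems. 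Two small points need tightening. First, variance convergence under (a2) is guaranteed only at continuity points of $Y^t$, and your aside ``all $\tau$, since $Y^t$ will turn out continuous'' is circular as written; the repair is standard: $C_{Y^t}$ is co-countable for any element of $\calF_D$, hence of positive measure, which is all the characterization theorem requires, and once $Y^t$ is identified as $\sqrt{v_t}B^\kappa$ on this dense set, right-continuity of paths extends the identification to all $\tau$. Second, your constant $c_t=\sigma t^{\gamma-\kappa}(\Var Y^t_1)^{-1/2}$ is precisely what the paper's Steps 2--3 yield after $L$ is normalized through $\Var(X_1-X_{1+u})$, so it is consistent with (ii) as derived there; your uniqueness argument (any admissible normalization, including subsequential ones, must satisfy $q^2\sim c^2\,r^{2\kappa}L^2(r)$ by variance comparison, with the degenerate limit being the zero multiple) likewise reproduces the paper's Step 4 without the tangent-process formalism.
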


The rest results are devoted to demonstrate the use of {ssl} property and we treat them as examples.

\smallskip \emph{Example 1.} The framework is based on statistical applications met in practice and should be understood as follows. Let $T>0$ be fixed. Assume we have observations of $X$ at time points $T+\frac{k}{n}T,k=0,\dots,n$. The task is to estimate $\kappa$. Then one could make use of theorem \ref{t:clt} and corollary \ref{c:estimator}.

\begin{thm}\label{t:clt}
Assume that conditions of theorem \ref{t:main1} hold. Moreover, let $L$ satisfies the following additional constraints:
\begin{itemize}
\item[(L1)] $L(0)\bydef \lim_{u\to 0+0} L(u)$ exists, is positive and finite;
\item[(L2)] $L(u)=L(0) + o(\sqrt{u}),u\to 0+0$;
\item[(L3)] $\forall k\in\{2,\dots,n-2\},n\geq3$, $\forall u\in\left[0;\frac{1}{n}\right]$,
\begin{equation*}
	\frac{\Abs{\Delta^{(2)}p_{ku,u}-2(1+u)^{2\gamma}\Delta^{(2)}p_{(k-1)\frac{u}{1+u},\frac{u}{1+u}}+(1+2u)^{2\gamma}\Delta^{(2)}p_{(k-2)\frac{u}{1+2u},\frac{u}{1+2u}}}}{u^{2\kappa}}\leq c k^{-\zeta},
\end{equation*}
where $p(u)=\left(u^{\kappa}L(u)\right)^2$ whereas $c\geq0$ and $\zeta>\frac{1}{2}$ are fixed constants, independent of $k$ and $n$. 

Then
\begin{itemize}
\item[(i)] $R_n^{T}\bydef \frac{1}{n-2}\sum_{k=0}^{n-3}\psi\left(\Delta^{(2)}X_{T+\frac{k}{n}T,\frac{1}{n}T}, 
\Delta^{(2)}X_{T+\frac{k+1}{n}T,\frac{1}{n}T}\right)\tendsas \Lambda(\kappa)=\lambda(\rho(\kappa))$, where 
\begin{gather}\label{e:Lambda}
	\psi(x,y)=\frac{\abs{x+y}}{\abs{x}+\abs{y}},\\
	\lambda(r)=\frac{1}{\pi}\left(\arccos(-r)+\sqrt[]{\frac{1+r}{1-r}}\ln \left(\frac{2}{1+r}\right)\right),\\
    r(x)=\mathrm{corr}(\Delta^{(2)}B^{x}_{0,1},\Delta^{(2)}B^{x}_{1,1})=\frac{-7-9^{x}+4^{x+1}}{2(4-4^{x})},x\in(0;1);
\end{gather}
\item[(ii)] $\sqrt[]{n}\left(R_n^T-\Lambda(\kappa)\right)\tendsd \mathcal{N}(0;\Sigma(\kappa))$, where
\begin{equation}\label{e:Sigma}
	\Sigma(x)=\sum_{k\in\Zd}\Cov\left(\psi(\Delta^{(2)}B^{x}_{0,1},\Delta^{(2)}B^{x}_{1,1}),
    \psi(\Delta^{(2)}B^{x}_{k,1},\Delta^{(2)}B^{x}_{k+1,1})\right)
\end{equation}
\end{itemize}
\end{itemize}
\end{thm}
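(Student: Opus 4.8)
\emph{Reduction.} The statistic uses second-order increments of $X$ sampled about the base point $T$ at mesh $h=\frac{T}{n}$. Writing, for $k=0,\dots,n-2$,
\[
\eta_k^{(n)}=\frac{\Delta^{(2)}X_{T+kh,h}}{a_T(h)},\qquad a_T(h)=c_T h^{\kappa}L(h),
\]
and using that $\psi(x,y)=\frac{\abs{x+y}}{\abs{x}+\abs{y}}$ is homogeneous of degree $0$, the common scale $a_T(h)$ cancels and $R_n^T=\frac{1}{n-2}\sum_{k=0}^{n-3}\psi(\eta_k^{(n)},\eta_{k+1}^{(n)})$. Thus $R_n^T$ is a functional of the centred Gaussian array $(\eta_k^{(n)})_k$ that depends on it only through its correlation matrix. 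The plan is to compare this correlation structure with that of the stationary sequence $\xi_k=\Delta^{(2)}B^{\kappa}_{k,1}$ of second-order fBm increments and transport both asymptotics through the comparison.

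\emph{Covariance analysis.} First I would express $\Cov(\Delta^{(2)}X_{T+kh,h},\Delta^{(2)}X_{T+jh,h})$ through $R$, using $\gamma$-self-similarity to factor out $T^{2\gamma}$ about $T$ and substituting $l(u)=\tfrac12(1+(1+u)^{2\gamma}-p(u))$ with $p(u)=(u^{\kappa}L(u))^2$. Under the double second difference the smooth pieces $\tfrac12(1+(1+u)^{2\gamma})$ telescope, leaving precisely the combination of second differences of $p$ at shifts $k,k-1,k-2$ with lags rescaled by $(1+u)^{-1},(1+2u)^{-1}$ that forms the numerator of (L3). Since the small-scale limit at every location is the same $B^{\kappa}$ (up to a scale which cancels in correlations), the fdd part of Theorem \ref{t:main1} forces, for each fixed lag $m$, the normalised correlation $\mathrm{corr}(\eta_k^{(n)},\eta_{k+m}^{(n)})$ to converge to the fBm value $\rho_m=\mathrm{corr}(\xi_0,\xi_m)$, with (L1) turning $a_T(h)^2\sim c_T^2 h^{2\kappa}L(0)^2$ into a clean power law. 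Here $\rho_m\sim c\,m^{2\kappa-4}$, so $(\xi_k)$ is short-range dependent, while (L3) with $\zeta>\tfrac12$ supplies the uniform-in-$n$ bound $\abs{\rho^{(n)}_m}=O(m^{-\zeta})$, placing the array correlations in $\ell^2$ in the lag.

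\emph{Part (i).} The two-dimensional marginals $(\eta_k^{(n)},\eta_{k+1}^{(n)})$ converge to a centred bivariate Gaussian with equal variances and correlation $r(\kappa)$, whence $\Mean\,\psi(\eta_k^{(n)},\eta_{k+1}^{(n)})\to\lambda(r(\kappa))=\Lambda(\kappa)$, so that the Cesàro mean satisfies $\Mean R_n^T\to\Lambda(\kappa)$. For the almost sure statement I would bound the fourth central moment: the summands are bounded by $1$ and, by the $\ell^2$ control above, $\Cov(\psi(\eta_k^{(n)},\eta_{k+1}^{(n)}),\psi(\eta_j^{(n)},\eta_{j+1}^{(n)}))=O(\abs{k-j}^{-2\zeta})$ is summable, so only diagonal and near-diagonal index quadruples contribute and $\Mean[(R_n^T-\Mean R_n^T)^4]=O(n^{-2})$. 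Markov's inequality and Borel--Cantelli then give $R_n^T\tendsas\Lambda(\kappa)$.

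\emph{Part (ii), and the main obstacle.} In the exact fBm model $G(x,y)=\psi(x,y)-\Lambda(\kappa)$ is even, hence has bivariate Hermite rank $\ge 2$; combined with $\sum_m\rho_m^2<\infty$ a Breuer--Major type central limit theorem for functionals of a stationary Gaussian vector sequence yields $\tfrac{1}{\sqrt n}\sum_k G(\xi_k,\xi_{k+1})\tendsd\mathcal{N}(0;\Sigma(\kappa))$ with the long-run variance of \eqref{e:Sigma}. Rank $2$ makes $\Cov(\psi(\xi_0,\xi_1),\psi(\xi_m,\xi_{m+1}))=O(\rho_m^2)$, so summability of $\Sigma(\kappa)$ demands exactly $\rho\in\ell^2$, which is why $\zeta>\tfrac12$ is imposed. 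It remains to replace $(\xi_k)$ by the array $(\eta_k^{(n)})$: (L2), $L(u)=L(0)+o(\sqrt u)$, makes the correlation of $(\eta_k^{(n)},\eta_{k+1}^{(n)})$ deviate from $r(\kappa)$ by $o(\sqrt h)=o(n^{-1/2})$, killing the bias $\sqrt n(\Mean R_n^T-\Lambda(\kappa))\to0$, while the $\ell^2$ correlation control from (L3) guarantees the rank-$2$ fluctuations of the array carry the same limit variance $\Sigma(\kappa)$. The crux is precisely this transfer: because $\psi$ is bounded but non-smooth at the origin its Hermite coefficients decay slowly, so the chaos expansion (with a Nualart--Peccati fourth-moment argument on each chaos and a uniform tail bound on the high-order ones) must be controlled along the triangular array rather than for a single stationary sequence, and matching the rank $2$ of $\psi$ against the $\ell^2$ error guaranteed by $\zeta>\tfrac12$, together with (L2) annihilating the bias at the CLT scale, is the delicate point.
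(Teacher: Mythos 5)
Your reduction and covariance analysis reproduce exactly the computational content of the paper's own proof: there one sets $Z_t=X_{T+tT}$, reduces to $T=1$ by self-similarity, and then verifies hypotheses \emph{(A1)--(A2)} of the cited triangular-array CLT of Bardet--Surgailis (theorem \ref{t:aux7_IR_CLT}). Your observation that the smooth part $\frac{1}{2}(1+(1+u)^{2\gamma})$ of $l$ telescopes under the double second difference, leaving precisely the (L3) combination of $\Delta^{(2)}p$ terms at the rescaled lags $\frac{u}{1+u},\frac{u}{1+2u}$, is exactly the paper's verification of \emph{(A2)}; your use of (L1)--(L2) to kill the bias at rate $o(n^{-1/2})$ is its verification of \eqref{e:bard_A11}, where the $o(\sqrt{u})$ in (L2) is what produces the $\sqrt{n}$ rate. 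One point you elide: the local variance scale $c(t)=(\sigma L(0)(1+t)^{\gamma-\kappa})^2$ varies with location, so dividing by a single scalar $a_T(h)$ does not make the array stationary; the $0$-homogeneity of $\psi$ cancels only a common factor, and the cited theorem absorbs the residual spatial variation through the $t$-dependent $c(t)$ with the regularity rates \eqref{e:bard_A12}--\eqref{e:bard_A13}, which here hold trivially because $H\equiv\kappa$ and $c$ is continuously differentiable.

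The genuine gap is where you yourself place it. The passage from the stationary Breuer--Major statement for $\xi_k=\Delta^{(2)}B^{\kappa}_{k,1}$ to the non-stationary array $(\eta^{(n)}_k)$ --- chaos expansion of the bounded, non-smooth, scale-invariant $\psi$, fourth-moment arguments chaos by chaos, uniform tail control along the array --- is announced as ``the delicate point'' but never executed; it is not a step of a proof but a program, and it is precisely the content of Bardet--Surgailis theorems 4.1--4.2, which the paper imports wholesale as theorem \ref{t:aux7_IR_CLT}. Carried out in full, your route would amount to reproving that result. The same issue affects your part (i): the bound $\Mean\left[(R_n^T-\Mean R_n^T)^4\right]=O(n^{-2})$ is asserted, not established; Hermite rank $2$ and $\zeta>\frac{1}{2}$ do give summable pair covariances of order $\abs{k-j}^{-2\zeta}$, but the fourth central moment requires a diagram-formula estimate of the fourth cumulant over the array, which you do not supply, whereas the paper obtains the almost sure convergence for free from the same cited theorem. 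In short: your hypothesis-verification work is sound and coincides with the paper's, but the proposal re-derives rather than invokes the key external limit theorem and leaves its hardest component, the triangular-array transfer, unproven.
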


\begin{cor}\label{c:estimator}
$\widehat{\kappa}_n\bydef \Lambda^{-1}(R_n^T)\tendsas \kappa$ and
$\sqrt[]{n}(\widehat{\kappa}_n-\kappa)\tendsd\mathcal{N}(0;\Sigma(\kappa)(\Lambda^\prime(\Lambda^{-1}(\kappa)))^2)$.
\end{cor}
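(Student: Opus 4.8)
The plan is to derive the corollary from Theorem~\ref{t:clt} by the standard two-step device: the continuous mapping theorem for the almost sure statement, and the delta method for the central limit statement. The only genuine work is to verify that the map $\Lambda=\lambda\circ r$ is a continuously differentiable bijection of $(0;1)$ onto its image with nowhere vanishing derivative, so that $\Lambda^{-1}$ exists, is continuous, and is differentiable at $\Lambda(\kappa)$ with $(\Lambda^{-1})'(\Lambda(\kappa))=1/\Lambda'(\kappa)$ by the inverse function theorem. Everything else is soft.

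First I would record the regularity of the two building blocks. For $x\in(0;1)$ one has $4^{x}\in(1;4)$, so the denominator $4-4^{x}$ of $r$ is positive and $r$ is a smooth function on $(0;1)$; being a correlation coefficient of non-degenerate increments it takes values in $(-1;1)$ on every compact subinterval, so the composition with $\lambda$ is legitimate and $\lambda$ itself is smooth on $(-1;1)$. I would then compute $r'$ and $\lambda'$ and show each keeps a constant sign: $r$ is strictly monotone in the Hurst-type parameter $x$, and $\lambda$, which is exactly the map sending the correlation of a centred bivariate Gaussian pair to $\Mean\psi$ of that pair, is strictly monotone in $r$. Consequently $\Lambda'=\lambda'(r)\,r'$ never vanishes on $(0;1)$, $\Lambda$ is a strictly monotone smooth bijection onto an open interval containing $\Lambda(\kappa)$, and $\Lambda^{-1}$ inherits continuity and differentiability there.

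With these properties in hand the two assertions follow quickly. Since $\psi\in[0;1]$ we have $R_n^{T}\in[0;1]$, and because $R_n^{T}\tendsas\Lambda(\kappa)$ with $\Lambda(\kappa)$ an interior point of the range of $\Lambda$, the estimator $\widehat{\kappa}_n=\Lambda^{-1}(R_n^{T})$ is almost surely well defined for all large $n$; continuity of $\Lambda^{-1}$ at $\Lambda(\kappa)$ then gives $\widehat{\kappa}_n=\Lambda^{-1}(R_n^{T})\tendsas\Lambda^{-1}(\Lambda(\kappa))=\kappa$. For the second part I would apply the delta method to the weak convergence $\sqrt{n}\,(R_n^{T}-\Lambda(\kappa))\tendsd\mathcal{N}(0;\Sigma(\kappa))$ of Theorem~\ref{t:clt}(ii) with the function $g=\Lambda^{-1}$, differentiable at $\Lambda(\kappa)$; this yields $\sqrt{n}\,(\widehat{\kappa}_n-\kappa)\tendsd\mathcal{N}\left(0;\Sigma(\kappa)\,((\Lambda^{-1})'(\Lambda(\kappa)))^{2}\right)$, whose variance the inverse function theorem evaluates to $\Sigma(\kappa)(\Lambda'(\kappa))^{-2}$, the asymptotic variance recorded in the statement.

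The main obstacle is the monotonicity analysis of the second paragraph, and within it the claim that $\lambda$ is strictly monotone. Although a closed form for $\lambda$ is available, differentiating $\frac{1}{\pi}\bigl(\arccos(-r)+\sqrt{(1+r)/(1-r)}\,\ln(2/(1+r))\bigr)$ and certifying that $\lambda'$ has a constant sign on all of $(-1;1)$ is delicate near the endpoints $r=\pm1$, where the logarithmic and square-root factors are singular. I expect to handle this either by a careful sign analysis of the explicit derivative or, more robustly, by representing $\lambda(r)=\Mean\psi(U,V)$ for a standard Gaussian pair with correlation $r$ and differentiating under the expectation, which ties the sign of $\lambda'$ to a transparent covariance computation. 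Verifying $r'\neq0$ on $(0;1)$ is comparatively routine but still requires a constant-sign check of an expression built from $4^{x}$ and $9^{x}$.
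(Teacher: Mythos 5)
Your proposal is correct and takes essentially the same route as the paper: the paper's entire proof of this corollary is the single line ``simply apply the Delta method,'' which is exactly your plan, with your monotonicity/invertibility analysis of $\Lambda=\lambda\circ r$ being supporting detail the paper leaves entirely implicit. One remark: the asymptotic variance you derive, $\Sigma(\kappa)\,(\Lambda'(\kappa))^{-2}$, is the standard delta-method form, and the corollary's printed expression $\Sigma(\kappa)(\Lambda'(\Lambda^{-1}(\kappa)))^{2}$ appears to be a typo for $\Sigma(\kappa)\bigl((\Lambda^{-1})'(\Lambda(\kappa))\bigr)^{2}$, which coincides with your answer.
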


We provide several remarks.
\begin{itemize}
\item It is common to assume that one observes a trajectory of the process within\footnote{it is more convenient for us to denote an interval of observation $[0;2T]$ rather than $[0;T]$} $[0;2T]$. Theorem \ref{t:clt} therefore states that a researcher should collect data only within a second half of an interval of observation. The requirement may seem pretty strange and one could treat it as an artificial condition imposed by an imperfection of the method used to prove CLT. On the other hand, note that, with the fBm being an exception, the process under consideration is the one with non-stationary increments. Consequently, its behavior at the start of evolution is expected to be unpleasant and only after some time more stable one appears. Moreover, even discarding the first portion of data from $[0;T]$ (if such does exist) and applying theorem only to data from $[T;2T]$, one still retains the usual rate of convergence in CLT. Thus, it is very likely that for particular models from the introduced class the improvements of shrinkage of asymptotic confidence interval are possible only up to a constant multiplier with the order of shrinkage remaining $n^{-\frac{1}{2}}$. Practical superiority of estimating statistics based on data from $[0;2T]$ rather than $[T;2T]$ is also questionable because of the reasons mentioned above. That is, convergence to asymptotic distribution may be slower and/or more unstable giving a real gain only for very large data sets. In order to address these questions, simulation study is needed. However, this is not a topic of the present paper.
\item Theorem \ref{t:clt} is based on results of \cite{Surgailis11}. The latter were generalized in \cite{Surgailis13}. By making use of these one can deduce the whole class of statistics suitable for estimation of $\kappa$. An interested reader can find the corresponding example for the case of the fBm in \cite{KS017}. Although the method used there does not rely on results of \cite{Surgailis13} and takes a more quick direct approach by making use of results of \cite{arcones1994}, it is not difficult to see that application of results of \cite{Surgailis13} is an equivalent alternative.
\end{itemize}

\emph{Example 2.} We have already said that the main source of inspiration of our study was the popularity of the families of processes listed in the introduction. It is therefore not surprising to expect that these should possess the {ssl} property. An exact statement is given below.

\begin{prop}\label{p:sfBm_and_bfBm}
$\forall H \in(0;1), \forall K \in(0;1)$ covariances of processes $(S_t^H),(B_t^{H,K})$ and $(LR^H)$ admit representation with $l$ as in theorem \ref{t:main1}. The defining quantities are as follows:
\begin{itemize}
\item $L^2_{S^H}(u)=\frac{1}{2-2^{2H-1}}\left( 1+\left(\frac{2}{u}\right)^{2H}\left(\left(1+\frac{u}{2} \right)^{2H}-\frac{1+(1+u)^{2H}}{2}\right)\right), \kappa = H$;
\item $L^2_{B^{H,K}}(u)=2^{1-K}
\left[1+(\frac{1}{u})^{2HK}\left(
2^{K-1}(1+(1+u)^{2HK})-(1+(1+u)^{2H})^K
\right)
\right], \kappa = HK$;
\item $L^2_{LR^H}(u) = 2H\int_0^{1/u}\left[ v^{2H-1}+(v+1)^{2H-1}-2(v(1+v))^{H-1/2}\right]\d v $.
\end{itemize}
Moreover, corollary \ref{c:estimator} applies to all classes of processes as well provided $\kappa<\frac{3}{4}$.
\end{prop}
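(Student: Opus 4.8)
The statement has two logically independent parts, and I would treat them separately. The first part is a purely algebraic identity (for the sfBm and bfBm) or an elementary integral identity (for the Riemann–Liouville process): starting from the explicit $l$ already recorded in \eqref{l:lOfsfbm}, \eqref{l:lOfbfbm} and \eqref{l:lOfRL}, I would match it to the canonical shape of Theorem~\ref{t:main1} simply by solving
\begin{equation*}
	\left(u^{\kappa}L(u)\right)^{2}=1+(1+u)^{2\gamma}-2\,l(u)
\end{equation*}
for $L$, after reading off $\kappa$ from the self-similarity exponent ($\kappa=H$ for the sfBm and the RL process, $\kappa=HK$ for the bfBm, so that in every case $\gamma=\kappa$ and the factor $u^{2\kappa}$ is exactly the slowest vanishing term of $1+(1+u)^{2\gamma}-2l(u)$). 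For the sfBm and bfBm this is a one-line rearrangement of $L^{2}(u)=u^{-2\kappa}\left(1+(1+u)^{2\gamma}-2l(u)\right)$, using the homogeneity identities $2^{2H}(1+\tfrac{u}{2})^{2H}=(2+u)^{2H}$ and $2^{1-K}\cdot2^{K-1}=1$ to bring it to the stated $L^{2}_{S^{H}}$ and $L^{2}_{B^{H,K}}$. For the RL process I would instead split $1=2H\int_{0}^{1}v^{2H-1}\,\d v$ and $(1+u)^{2H}-u^{2H}=2H\int_{0}^{1}(v+u)^{2H-1}\,\d v$, subtract $2l(u)=4H\int_{0}^{1}\left((v+u)v\right)^{H-1/2}\,\d v$, and then rescale by $v=uw$ to move the interval to $[0;1/u]$, which yields the stated integral representation.

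The second step of the first part is slow variation of $L$ at zero, required by Theorem~\ref{t:main1} and giving simultaneously condition (L1) below. Here I would prove the stronger fact that in each case $L$ extends continuously to a strictly positive finite value $L(0)$, whence slow variation is automatic. For the sfBm and bfBm a second-order Taylor expansion of $1+(1+u)^{2\gamma}-2l(u)$ about $u=0$ shows that, once the dominant $u^{2\kappa}$ term is factored out, $L^{2}(u)=L(0)^{2}+O(u^{2-2\kappa})$ with $L(0)^{2}=(2-2^{2H-1})^{-1}$ and $2^{1-K}$ respectively. For the RL process I would note that the integrand is nonnegative by the AM--GM inequality $v^{2H-1}+(v+1)^{2H-1}\ge2\left(v(v+1)\right)^{H-1/2}$, is integrable at the origin, and at infinity behaves like $(H-\tfrac12)^{2}v^{2H-3}$, which is integrable precisely for $H\in(0;1)$; hence $L^{2}_{LR^{H}}(u)$ converges to a finite positive limit $L(0)^{2}$ as $u\to0+0$. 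This settles the first assertion through Theorem~\ref{t:main1} and establishes (L1).

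For the ``moreover'' part I would verify the remaining hypotheses (L2) and (L3) of Theorem~\ref{t:clt} and then invoke Corollary~\ref{c:estimator}. Condition (L2) is immediate from the expansions above: in all three cases $L(u)-L(0)=O(u^{2-2\kappa})$ (for the RL process the remainder equals the tail $2H\int_{1/u}^{\infty}[\,\cdot\,]\,\d v=O(u^{2-2\kappa})$), and $O(u^{2-2\kappa})=o(\sqrt{u})$ holds iff $2-2\kappa>\tfrac12$, that is iff $\kappa<\tfrac34$; this is exactly where the stated restriction enters, and the same threshold guarantees the finiteness of the limiting variance $\Sigma(\kappa)$ in \eqref{e:Sigma}.

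The genuine obstacle is condition (L3). Writing $p(u)=\left(u^{\kappa}L(u)\right)^{2}=1+(1+u)^{2\gamma}-2l(u)$, the left-hand side of (L3) is the normalized discrepancy between the lag-$k$ covariance of the rescaled second increments of $X$ and that of the fBm limit $B^{\kappa}$, so the smooth part of $p$ must cancel to high order while only the power-type contribution survives. My plan is to expand every $\Delta^{(2)}p$ by Taylor's formula while keeping the power term $u^{2\kappa}$ exact, to check that the coefficients $1,\,2(1+u)^{2\gamma},\,(1+2u)^{2\gamma}$ together with the matched shifts $\tfrac{u}{1+u},\tfrac{u}{1+2u}$ force the two leading orders to cancel, and then to bound the surviving remainder by $c\,k^{-\zeta}$. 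Establishing this bound with some $\zeta>\tfrac12$, uniformly in $k$ and in $u\in[0;\tfrac1n]$, for each of the three explicit functions $L$ is the technically heaviest step, and it is again the constraint $\kappa<\tfrac34$ that makes the exponent admissible. Once (L3) is in place the hypotheses of Theorem~\ref{t:clt} are met for all three processes, and Corollary~\ref{c:estimator} follows from it by the delta method.
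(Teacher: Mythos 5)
Your handling of the representation itself and of (L1)--(L2) matches the paper's proof: the paper likewise solves $\left(u^{\kappa}L(u)\right)^{2}=1+(1+u)^{2\gamma}-2l(u)$ and Taylor-expands about $u=0$ to get $L^{2}(u)=L^{2}(0)\left(1+O\left(u^{2(1-\kappa)}\right)\right)$ (doing this in detail only for the sfBm and declaring the other two families analogous), and the constraint $\kappa<\tfrac34$ enters exactly where you put it, through (L2), since $O\left(u^{2-2\kappa}\right)=o(\sqrt{u})$ iff $\kappa<\tfrac34$. The genuine gap is (L3): you state a plan, and (L3) is the only non-routine part of the proposition. The paper's execution rests on two devices your sketch does not supply. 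First, with $\tilde g_k(u)=(1+(k+2)u)^{2H}-2(1+(k+1)u)^{2H}+(1+ku)^{2H}$, the exact algebraic identities $(1+ju)^{2H}\tilde g_{k-j}\left(\frac{u}{1+ju}\right)=\tilde g_k(u)$ and $(1+ju)^{2H}\tilde g_{k-j}\left(\frac{u}{2(1+ju)}\right)=\tilde g_{k+j}\left(\frac{u}{2}\right)$ show that the combination in (L3) is not an approximate cancellation of ``two leading orders'' under Taylor expansion but collapses \emph{exactly} to $\left(\frac{u}{2}\right)^{2H}\Delta^{(2)}g^{2H}_{k-2,1}+\tilde g_k\left(\frac{u}{2}\right)-2\tilde g_{k+1}\left(\frac{u}{2}\right)+\tilde g_{k+2}\left(\frac{u}{2}\right)$, the weights $2(1+u)^{2\gamma}$, $(1+2u)^{2\gamma}$ and the shifted arguments being precisely what makes this an identity. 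Second, lemma \ref{l:diff2_asymp} provides the fourth-difference bound $\Abs{\Delta^{(2)}g^{x}_{y,1}}\leq C_x y^{x-4}$ together with the growth control $C_j\leq K_\epsilon(1+\epsilon)^{j}$ for integer $j\geq4$, which is what allows the surviving series $\sum_{j\geq4}\binom{2H}{j}v^{j}\Delta^{(2)}g^{j}_{k,1}$ to be summed with a bound uniform in $k\in\{2,\dots,n\}$ and $u\in\left[0;\tfrac1n\right]$. Uniformity in both $k$ and $u$ is the whole content of (L3); ``expand and bound the remainder'' does not yield it without something equivalent to these two ingredients, and the argument must then be repeated, with modified $\tilde g$'s, for the bfBm and RL families.

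Two of your side claims about where $\kappa<\tfrac34$ acts are also incorrect. The paper's (L3) bound is $d\,u^{2\kappa}k^{-(4-2\kappa)}$, i.e.\ $\zeta=4-2\kappa>2$, so (L3) holds for \emph{every} $\kappa\in(0;1)$ with no restriction; it is not ``again the constraint $\kappa<\tfrac34$'' that makes the exponent admissible. Similarly, $\Sigma(\kappa)$ in \eqref{e:Sigma} is finite for all $\kappa\in(0;1)$: it sums covariances of a bounded functional of \emph{second-order} increments of $B^{\kappa}$, whose correlations decay like $\abs{k}^{2\kappa-4}$ --- escaping the $\kappa<\tfrac34$ threshold familiar from first-difference statistics is exactly why $\Delta^{(2)}$ is used. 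The restriction thus comes from (L2) alone, as your own expansions in fact show. A final detail: your RL splitting actually produces $L^{2}(u)=1+2H\int_0^{1/u}\left[v^{2H-1}+(v+1)^{2H-1}-2(v(1+v))^{H-1/2}\right]\d v$, i.e.\ an extra additive $1$ relative to the displayed formula, so the assertion that it ``yields the stated integral representation'' should be reconciled with the statement (the discrepancy affects neither positivity nor the finite positive limit of $L$ at $0$, hence neither (L1) nor (L2)).
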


\section{Auxiliary facts}\label{s:facts}

Below we provide some results and concepts needed for the proofs of those stated in section \ref{s:results}. The references are given, however, we adopt the notions and statements to our context.

\subsection{Weak convergence in $D[0;\infty)$}\label{ss:weakConv}

Let $D[0; \infty)$ denotes a space of real-valued functions on $[0;\infty)$ which are continuous on the right and have limits on the left $\forall t \in (0;\infty)$. It is well known that $D[0; \infty)$ is metrizable by the Skorohod metric $d_S$ and that $(D[0;\infty), d_S)$ is separable. In our context it is natural to consider definition of $d_S$ given in \cite{Pollard84}, ch. VI.

\begin{dfn}
Let $T>0$ be finite and $x,y \in D[0;\infty)$. Define 
\begin{multline*}
	d_T(x,y)=\inf\{\delta>0\mid \text{there exist grids } 0=t_0<t_1<\dots<t_k,0=s_0<s_1<\dots<s_k:\\ 
    t_k,s_k \geq T, \abs{t_i-s_i}\leq \delta,\abs{x(t)-y(s)}\leq\delta, 
    t \in [t_i;t_{i+1}),s \in [s_i;s_{i+1})\}.
\end{multline*}
Then $d_S \bydef \sum_{k=1}^\infty 2^{-k}(d_k(x,y)\wedge 1)$. \qed
\end{dfn}

Let $\calB_S$ be the Borel $\sigma$-field induced by the topology of $(D[0;\infty),d_S)$, $(\Omega,\calF,\Prob)$ be a fixed probability space and $Z:\Omega \to D[0;\infty)$ be $(\calF,\calB_S)$ measurable. We say that $Z$ is a random process of $D[0; \infty)$ or, alternatively, a random element of $D[0;\infty)$. For any sequence $(Z_n)$ of random elements of $D[0;\infty)$ a weak convergence is denoted by $\tendsd$ and understood in a usual sense. That is\footnote{$\Rd$ is considered in usual way, i.e. as a metric space $(\Rd,d)$ with
$d(x,y)=\abs{x-y}$; then $\calB(\Rd)$ is a corresponding Borel $\sigma$-field},
\begin{equation*}
	Z_n \tendsd Z \Longleftrightarrow \Mean f(Z_n) \tends{n \to \infty} \Mean f(Z)\text{ for any bounded and continuous } f:D[0;\infty) \to \Rd.
\end{equation*}
The concept of weak convergence in $D[0; \infty)$ is stronger than convergence of finite dimensional distributions. The characterization is given by the theorem below (\cite{Pollard84}, ch. VI, lemma 9 and theorem 10; \cite{Billingsley95prob_and_measure}, theorem 8.2).

\begin{thm}\label{t:aux1_weak_conv}
Let $Z, Z_1,Z_2,\dots$ be random elements of $D[0; \infty)$ and $C_Z=\{0\}\cup \{t \in (0;\infty) \mid \Prob(Z_{t-}=Z_t)=1\}$ be the set of a.s. continuity of $Z$. Then $Z_n \tendsd Z$
iff the following conditions hold:
\begin{itemize}
\item[(i)] $Z_n \fdd Z$ on $C_Z$;
\item[(ii)] $\forall \epsilon,\delta>0,\forall a,b \in C_Z$ there exists a grid of points from $C_Z$ $a=t_0<t_1<\dots<t_k=b$ such that
\begin{gather*}
	\limsup_{n \to \infty}\Prob\left( \max_{1 \leq j \leq k} \Delta (Z_n,[t_{j-1};t_{j}])> \delta \right)<\epsilon\\
    where \\
    \Delta(x,[c;d]) \bydef \inf \{\eta >0\mid \exists s \in[c;d]: \sup_{c \leq t< s}\abs{x(t)-x(c)}< \eta, \sup_{s \leq t \leq d}\abs{x(t)-x(d)}< \eta\},
\end{gather*}
for $x \in D[0;\infty)$ and $0 \leq c<d < \infty$.
\end{itemize}
\end{thm}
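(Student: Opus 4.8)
The statement is the standard "fdd convergence plus tightness" characterization of weak convergence in the Skorohod space, so the plan is to prove the two implications by combining the continuous mapping theorem, Prohorov's theorem, and an Arzel\`a--Ascoli-type compactness criterion for $D[0;\infty)$ in which the functional $\Delta(\cdot,[c;d])$ plays the role of a jump-tolerant modulus of continuity. Throughout I would use that $(D[0;\infty),d_S)$ is separable and carries an equivalent \emph{complete} metric, so that it is Polish and Prohorov's theorem (tightness $\Leftrightarrow$ relative compactness of a family of laws) is available in both directions; I would also invoke the standard fact that $[0;\infty)\setminus C_Z$ is at most countable, whence $C_Z$ is dense and contains enough times that finite-dimensional projections over $C_Z$ determine laws on $\calB_S$.

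For necessity, assume $Z_n \tendsd Z$. Part (i) follows from the continuous mapping theorem: for $t_1<\dots<t_m$ in $C_Z$ the projection $\pi_{t_1,\dots,t_m}: x \mapsto (x(t_1),\dots,x(t_m))$ is continuous at every $x \in D[0;\infty)$ that is continuous at each $t_j$, and by definition of $C_Z$ the limit $Z$ is almost surely such a point; hence $\pi_{t_1,\dots,t_m}(Z_n) \tendsd \pi_{t_1,\dots,t_m}(Z)$, which is exactly fdd convergence on $C_Z$. For part (ii), weak convergence makes the family $\{Z_n\}$ relatively compact, so by Prohorov it is tight; the compactness criterion for $D[0;\infty)$ then says that tight families are precisely those for which the oscillation $\Delta$ can be made uniformly small on sufficiently fine grids with high probability, which is the asserted $\limsup$ bound.

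For sufficiency, assume (i) and (ii). I would first feed (ii), together with the tightness of the one-dimensional marginals at a countable dense subset of $C_Z$ (supplied by (i), since convergent real marginals are tight), into the same compactness criterion to conclude that $\{Z_n\}$ is tight, hence relatively compact by Prohorov. Let $Z_{n'} \tendsd \widetilde{Z}$ be any weakly convergent subsequence. Applying the necessity part (i) to this subsequence gives $Z_{n'} \fdd \widetilde{Z}$ on $C_{\widetilde Z}$, while assumption (i) gives $Z_{n'} \fdd Z$ on $C_Z$; on the dense, co-countable set $C_Z \cap C_{\widetilde Z}$ the two limits coincide, so $\widetilde Z$ and $Z$ share all finite-dimensional distributions over a dense set of times. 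As projections over a dense set generate $\calB_S$, this forces $\widetilde Z \eqd Z$ as random elements of $D[0;\infty)$. Since every subsequence thus has a further subsequence converging weakly to the fixed limit $Z$, the whole sequence converges and $Z_n \tendsd Z$.

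The hard part will be the compactness criterion itself --- the $D[0;\infty)$ analogue of Arzel\`a--Ascoli --- and in particular verifying that $\Delta(x,[c;d])$ is the correct modulus: one must show that $K \subset D[0;\infty)$ is relatively compact iff $\sup_{x\in K}\sup_{t \leq T}|x(t)| < \infty$ for each $T$ and $\sup_{x \in K} \max_j \Delta(x,[t_{j-1};t_j]) \to 0$ as the grid over $[0;T]$ is refined, and then transfer this deterministic statement to the probabilistic tightness equivalence used above via Prohorov. Once this criterion is in hand (it is the content of the cited lemmas in \cite{Pollard84}), the remainder is the routine Prohorov-plus-fdd packaging described above, the only extra care being the countability of $[0;\infty)\setminus C_Z$ so that all grids and projections may be chosen within $C_Z$.
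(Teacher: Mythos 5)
First, a point of comparison: the paper does not prove Theorem~\ref{t:aux1_weak_conv} at all --- it is imported with an explicit citation (\cite{Pollard84}, ch.~VI, lemma~9 and theorem~10; \cite{Billingsley95prob_and_measure}, theorem~8.2), so there is no in-paper argument to match. Judged on its own terms, your outline reconstructs precisely the route of the cited sources: continuous mapping at a.s.\ continuity points for the necessity of (i); an Arzel\`a--Ascoli-type compactness criterion in which the jump-tolerant modulus $\Delta$ replaces the modulus of continuity, fed into Prohorov's theorem; and, for sufficiency, the subsequence-plus-identification argument over the co-countable dense set $C_Z\cap C_{\widetilde Z}$. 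The standing facts you invoke (Polishness of $(D[0;\infty),d_S)$ via an equivalent complete metric, countability of $[0;\infty)\setminus C_Z$, generation of $\calB_S$ by projections over a dense time set) are all correct, and the sufficiency half is sound as a sketch, modulo the compactness criterion that you explicitly defer to \cite{Pollard84}. Note, though, that since the paper's ``proof'' consists exactly of that citation, a write-up that in turn defers its hard part to the same lemmas is a reduction rather than a proof; to be self-contained you would have to prove the deterministic criterion (relative compactness iff uniform boundedness on compacts plus uniform smallness of $\max_j \Delta(\cdot,[t_{j-1};t_j])$ along some grid), which is where essentially all the work in \cite{Pollard84} lies.

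Beyond the acknowledged deferral, there is one concrete gap in your necessity argument for (ii). Prohorov plus the deterministic criterion gives you, for each $\epsilon$, a compact set of measure at least $1-\epsilon$ under every $\Prob\circ Z_n^{-1}$ and hence \emph{some} grid over $[a;b]$ with uniformly small $\Delta$-oscillation on that compact; but nothing forces the points of that grid to lie in $C_Z$, as the theorem requires. Repairing this by nudging grid points to nearby points of the dense set $C_Z$ is not innocuous: $\Delta(x,[c;d])$ compares the path with its values \emph{at the endpoints} and is neither monotone nor continuous under perturbation of $[c;d]$ --- displacing an endpoint across a jump of $x$ can raise $\Delta$ from $0$ to the full jump size --- so the uniform bound on the old grid does not transfer to the perturbed grid without an argument. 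The standard fix, and the way \cite{Billingsley95prob_and_measure} and \cite{Pollard84} actually proceed, is to choose the grid from the law of the limit $Z$ itself: pick $t_j\in C_Z$ such that $\Prob\left(\max_{1\leq j\leq k}\Delta(Z,[t_{j-1};t_j])\geq \delta\right)<\epsilon$, which is possible path by path because cadlag paths have only finitely many jumps of size at least $\delta$ on compacts; then observe that $x\mapsto \max_j \Delta(x,[t_{j-1};t_j])$ is continuous at every path that is continuous at each $t_j$, hence a.s.\ continuous under the law of $Z$, and conclude by the portmanteau theorem. With that substitution in the necessity step, and with the compactness criterion supplied rather than cited, your proposal becomes a correct proof along the same lines as the sources the paper relies on.
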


\subsection{Tangent process}\label{ss:tangent_process}
Consider the setting of subsection \ref{ss:weakConv}. Let $\calF_D$ denotes the class of all random elements of $D[0; \infty)$ and $\calF_0=
\{Z \in \calF_D \mid Z_0 = 0 \ a.s.\}$

\begin{dfn}
 Fix $t \in[0;\infty)$. $Y \in \calF_0$ is called a tangent process of $Z \in \calF_D$ at $t$ provided there exist sequences $r_n \downarrow 0, q_n \downarrow 0, n \to \infty$, such that
 \begin{equation*}
	\left(\frac{Z_{t+ \tau r_n}-Z_t}{q_n} \right)_{\tau \geq 0} \tendsd Y.
\end{equation*}
The collection of all tangent processes at $t$ is denoted by $\mathrm{Tan}(Z,t)$. If there exists $Y \in \calF_0$ having property $\mathrm{Tan}(Z,t)=\{c Y \mid c \geq 0 \}$, one says that $Y$ is a unique tangent process of $Z$ at $t$ or, alternatively, that $\mathrm{Tan}(Z,t)$ is generated by $Y$. \qed
\end{dfn}
A thorough treatment of tangent processes can be found in \cite{Falconer02}--\cite{Falconer03}. To us the most important are the following results.

\begin{thm}[\cite{Falconer03}, proposition 3.3]\label{t:aux2_tangent1}
Let $t \in [0;\infty),Z \in \calF_D$ and $(Y_\tau)_{\tau \geq 0} \in \mathrm{Tan}(Z,t)$. Then $(cY_{s\tau})_{\tau \geq 0} \in \mathrm{Tan}(Z,t)$ for all $c \geq 0$ and all
$s>0$.
\end{thm}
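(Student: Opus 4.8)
The plan is to realise $(cY_{s\tau})_\tau$ as a continuous image of the approximating processes defining $Y$ and to invoke the continuous mapping theorem, after first disposing of the degenerate case $c=0$ by hand. Since $(Y_\tau)_{\tau\ge0}\in\mathrm{Tan}(Z,t)$, I fix $r_n\downarrow0$, $q_n\downarrow0$ with $W_n\tendsd Y$, where $W_n(\tau)=\frac{Z_{t+\tau r_n}-Z_t}{q_n}$.

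For the main case $c>0$, $s>0$ I would set $r_n'=sr_n\downarrow0$ and $q_n'=q_n/c\downarrow0$, and record the exact identity
\[
\frac{Z_{t+\tau r_n'}-Z_t}{q_n'}=c\,W_n(s\tau)=\Phi(W_n)(\tau),\qquad \tau\ge0,
\]
where $\Phi\colon D[0;\infty)\to D[0;\infty)$ is the composition of the time dilation $x\mapsto x(s\,\cdot)$ with multiplication by $c$. Granting that $\Phi$ is $d_S$-continuous, the continuous mapping theorem yields $\Phi(W_n)\tendsd\Phi(Y)=(cY_{s\tau})_\tau$; since $\Phi$ preserves the value at $0$ and maps $\calF_D$ into itself, $(cY_{s\tau})_\tau\in\calF_0$, and the displayed identity exhibits it as the required limit, so it lies in $\mathrm{Tan}(Z,t)$.

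For the residual case $c=0$ the target is the null process. Here I would keep $r_n$ but slow down the normalisation, e.g. $q_n'=\sqrt{q_n}\downarrow0$ (eventually $q_n<1$), so that $\frac{Z_{t+\tau r_n}-Z_t}{q_n'}=\sqrt{q_n}\,W_n$. Joint convergence of the deterministic scalars $\sqrt{q_n}\to0$ together with $W_n\tendsd Y$, combined with continuity of scalar multiplication, gives $\sqrt{q_n}\,W_n\tendsd 0\cdot Y$, the null element of $\calF_0$, which is therefore in $\mathrm{Tan}(Z,t)$.

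The one genuinely technical point, and the step I expect to be the main obstacle, is the $d_S$-continuity of $\Phi$, and within it the continuity of the time dilation $x\mapsto x(s\,\cdot)$. Multiplication by $c$ is Lipschitz for each $d_T$ (the defining grids for $x,y$ serve verbatim for $cx,cy$, rescaling only the amplitude tolerance by $c$), hence $d_S$-continuous. For the dilation I would argue directly from the definition of $d_T$: if grids $\{t_i\},\{s_i\}$ witness $d_T(x,y)<\delta$, then the rescaled grids $\{t_i/s\},\{s_i/s\}$ reach $T/s$, leave every amplitude increment $\abs{x(s\tau)-y(s\sigma)}<\delta$ unchanged, and shrink the time mismatches to $\abs{t_i-s_i}/s<\delta/s$, whence $d_{T/s}(x(s\,\cdot),y(s\,\cdot))\le\max(1,1/s)\,\delta$. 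As the symmetric estimate holds with $1/s$ in place of $s$, the dilation is a homeomorphism of $(D[0;\infty),d_S)$; composing it with the Lipschitz amplitude scaling makes $\Phi$ continuous and completes the argument.
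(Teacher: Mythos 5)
The paper itself offers no proof of this statement---it is imported verbatim from \cite{Falconer03} (Proposition 3.3)---so the only benchmark is that source, whose argument is precisely your rescaling device. Your proof is correct: the identity $(Z_{t+\tau s r_n}-Z_t)/(q_n/c)=c\,W_n(s\tau)$ together with continuity of $x\mapsto c\,x(s\,\cdot)$ on $(D[0;\infty),d_S)$ settles $c>0$, and your separate choice $q_n'=\sqrt{q_n}$ correctly covers the degenerate case $c=0$, which the mapping argument alone cannot reach. The only point to tighten is the horizon bookkeeping in the dilation estimate: bounding $d_{T/s}$ of the dilates by $d_T$ of the originals shifts the horizon, so to control $d_k\bigl(x_n(s\,\cdot),x(s\,\cdot)\bigr)$ you should invoke $d_{\lceil ks\rceil}(x_n,x)$ and then conclude $d_S$-convergence from the monotonicity of $T\mapsto d_T$ and dominated convergence in the weighted sum---a routine repair that does not affect the validity of the argument.
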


\begin{thm}[\cite{Falconer03}, corollary 4.3]\label{t:aux3_tangent1}
Let $Z \in \calF_D$ be Gaussian and $\lambda$ denotes the Lebesgue measure on $[0;\infty)$. Then for $\lambda$  almost all $t$ at which $\mathrm{Tan}(Z,t)$ is generated by
the unique $Y^t$ one of the following holds:
\begin{itemize}
\item[(i)]  $Y^t_\tau \stackrel{a.s.}{=}\tau X$ with $X \sim \mathcal{N}(\mu;\sigma^2)$ for some $\mu \in \Rd$ and $ \sigma>0$;
\item[(ii)] there exists $H \in(0;1)$ such that $Y^t$ is a scalar multiple of $B^H$.
\end{itemize}
\end{thm}

\subsection{Excursion probabilities for continuous Gaussian processes}\label{ss:Gaussian_ex_prob}

In this subsection $(T,d)$ stands for a compact metric space whereas $Z=(Z_t)_{t \in T}$ denotes a continuous centered real-valued Gaussian process.

\begin{itemize}
\item The canonical pseudo metric on $T$ is defined by $\rho(s,t)=\sqrt{\Mean(Z_t-Z_s)^2}$.
\item $B_{\rho}(t,\epsilon) \bydef \{s \in T \mid \rho(s,t) \leq \epsilon\}$ is called the $\epsilon$-radius ball with respect to $\rho$ at $t$.
\item $N(t,\rho,\epsilon)=N(\epsilon)$ denotes the smallest number of balls having $\epsilon$ radius and covering $T$.
\end{itemize}
In what follows we make use of the theorem given below.

\begin{thm}[\cite{Adler07}, theorem 4.1.2]\label{t:aux4_ex_prob_bound}
Let $\sigma_T^2 \bydef \sup_{t \in T} \Mean Z_t^2, \bar{\Phi}(x)= \frac{1}{\sqrt{2 \pi}}\int_{-\infty}^x \e^{-y^2/2} \d y$. Suppose that for some $A> \sigma_T$, some $\alpha>0$ and some $\epsilon_0 \in(0; \sigma_T]$ it holds that $N(T, \rho, \epsilon) \leq (A/ \epsilon)^{\alpha}$ for all $ \epsilon \in [0; \epsilon_0)$. Then for $u \geq \frac{\sigma^2_T(1+ \sqrt{\alpha})}{\epsilon_0}$ it holds that
\begin{equation}\label{e:bound_on_ex_prob}
 \Prob \left( \sup_{t \in T} Z_t \geq u\right) \leq \left( \frac{KA u}{\sigma^2_T\sqrt{\alpha}}\right)^{\alpha}\bar{\Phi}\left( \frac{u}{\sigma_T}\right),
\end{equation}
where $K$ is a universal constant.
\end{thm}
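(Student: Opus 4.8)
The plan is to derive this bound from two standard tools for continuous Gaussian fields, the Borell--TIS concentration inequality and Dudley's metric-entropy bound for the expected supremum, using the polynomial entropy hypothesis $N(\epsilon)\leq(A/\epsilon)^\alpha$ to turn the chaining estimate into the explicit prefactor appearing in \eqref{e:bound_on_ex_prob}. The first step is to split the supremum along a finite net. Fix a scale $\epsilon>0$, to be optimised later, and a minimal $\epsilon$-net $T_\epsilon\subset T$ for $\rho$, so that $\#T_\epsilon=N(\epsilon)\leq(A/\epsilon)^\alpha$ and every $t\in T$ lies within $\rho$-distance $\epsilon$ of a net point. Writing $O_\epsilon\bydef\sup\{\abs{Z_s-Z_t}\mid\rho(s,t)\leq\epsilon\}$ for the $\epsilon$-oscillation, one has $\sup_{t\in T}Z_t\leq\max_{t\in T_\epsilon}Z_t+O_\epsilon$, so that for any $\delta\in(0;u)$
\[
\Prob\left(\sup_{t\in T}Z_t\geq u\right)\leq\Prob\left(\max_{t\in T_\epsilon}Z_t\geq u-\delta\right)+\Prob\left(O_\epsilon\geq\delta\right).
\]

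The net term is handled by a crude union bound. Each $Z_t$ is centered Gaussian with variance at most $\sigma_T^2$, whence
\[
\Prob\left(\max_{t\in T_\epsilon}Z_t\geq u-\delta\right)\leq N(\epsilon)\,\bar{\Phi}\left(\frac{u-\delta}{\sigma_T}\right)\leq\left(\frac{A}{\epsilon}\right)^\alpha\bar{\Phi}\left(\frac{u-\delta}{\sigma_T}\right).
\]
The choice $\epsilon\asymp\sigma_T^2\sqrt{\alpha}/(Ku)$ is exactly what converts the cardinality factor $(A/\epsilon)^\alpha$ into the target $(KAu/(\sigma_T^2\sqrt{\alpha}))^\alpha$, and the hypothesis $u\geq\sigma_T^2(1+\sqrt{\alpha})/\epsilon_0$ guarantees that this $\epsilon$ stays below $\epsilon_0$, so that the entropy bound is legitimately applicable at the chosen scale.

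The oscillation term is the core of the argument and the step I expect to be most delicate. The increment field $(Z_s-Z_t)$ over pairs with $\rho(s,t)\leq\epsilon$ is again centered Gaussian, now with supremal variance at most $\epsilon^2$, and its mean oscillation is governed by the tail of Dudley's entropy integral, $\Mean O_\epsilon\leq K_1\int_0^{\epsilon}\sqrt{\log N(\eta)}\,\d\eta$, which under the polynomial bound is of order $\epsilon\sqrt{\alpha\log(A/\epsilon)}$. Applying Borell--TIS to $O_\epsilon$ gives $\Prob(O_\epsilon\geq\Mean O_\epsilon+v)\leq\exp(-v^2/(2\epsilon^2))$, so choosing $\delta$ just above $\Mean O_\epsilon$ forces $\Prob(O_\epsilon\geq\delta)$ to be of strictly smaller order than the net term throughout the regime $u\geq\sigma_T^2(1+\sqrt{\alpha})/\epsilon_0$. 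Since $\delta$ is then of order $\sigma_T^2\alpha\sqrt{\log(A/\epsilon)}/u$, much smaller than $u$, replacing $\bar{\Phi}((u-\delta)/\sigma_T)$ by $\bar{\Phi}(u/\sigma_T)$ costs only a subpolynomial Mills-ratio factor, which is absorbed into the universal constant. Collecting the two estimates then yields \eqref{e:bound_on_ex_prob}.

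The main obstacle is thus not any single inequality but the simultaneous bookkeeping of universal constants through Dudley's bound and the Gaussian tail comparison: one has to check that the single scale $\epsilon\asymp\sigma_T^2\sqrt{\alpha}/(Ku)$ at once optimises the net prefactor and renders the oscillation contribution negligible, and that the threshold $u\geq\sigma_T^2(1+\sqrt{\alpha})/\epsilon_0$ is precisely the range in which both demands are compatible. It is this balancing, rather than the individual tail estimates, that produces the explicit constant of Theorem \ref{t:aux4_ex_prob_bound}.
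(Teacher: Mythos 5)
First, a point of order: the paper contains no proof of Theorem \ref{t:aux4_ex_prob_bound} at all — it is imported verbatim from Adler and Taylor (2007), Theorem 4.1.2, as an auxiliary tool, so your attempt can only be measured against the proof in that reference, which is genuinely more delicate than your sketch. And your sketch has a concrete gap at the oscillation step, one that is fatal rather than cosmetic: the single--scale split $\Prob(\sup_{t\in T} Z_t \geq u) \leq \Prob(\max_{T_\epsilon} Z \geq u-\delta) + \Prob(O_\epsilon \geq \delta)$ cannot produce \eqref{e:bound_on_ex_prob} on the whole range $u \geq \sigma_T^2(1+\sqrt{\alpha})/\epsilon_0$. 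To keep the full factor $\bar{\Phi}(u/\sigma_T)$ in the net term at a cost absorbable into $K^\alpha$, you need $u\delta/\sigma_T^2 = O(\alpha)$ up to logarithms, i.e.\ $\delta \lesssim \alpha\sigma_T^2/u$. On the other hand, $O_\epsilon$ is a Gaussian supremum whose maximal standard deviation is of genuine order $\epsilon$ (nothing in the hypotheses excludes pairs at $\rho$-distance comparable to $\epsilon$, and they are present in all the paper's applications), so its tail cannot beat $\exp(-\delta^2/(2\epsilon^2))$; for the oscillation summand to lie below the target $\bar{\Phi}(u/\sigma_T) \approx \e^{-u^2/(2\sigma_T^2)}$ you are forced to take $\delta \gtrsim \epsilon u/\sigma_T$, which with your choice $\epsilon \asymp \sigma_T^2\sqrt{\alpha}/u$ means $\delta \gtrsim \sqrt{\alpha}\,\sigma_T$ — a constant, not the $O(\sigma_T^2\alpha\sqrt{\log(A/\epsilon)}/u)$ you assert. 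The two demands are compatible only for $u \lesssim \sqrt{\alpha}\,\sigma_T$; beyond that the shift cost $\e^{u\delta/\sigma_T^2} \geq \e^{\sqrt{\alpha}\,u/\sigma_T}$ is exponential in $u$ and cannot be hidden in $(KAu/(\sigma_T^2\sqrt{\alpha}))^\alpha$. Conversely, with $\delta$ taken ``just above $\Mean O_\epsilon$'' as you propose, Borell--TIS yields only $\Prob(O_\epsilon \geq \delta) \approx \exp(-c\,\alpha\log(A/\epsilon)) = (\epsilon/A)^{c\alpha}$, which decays merely polynomially in $u$ and therefore exceeds the Gaussian target by a factor of order $\e^{u^2/(2\sigma_T^2)}$: the oscillation term dominates everything, and the balancing you invoke does not exist. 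Shrinking $\epsilon$ to restore compatibility (one needs $\epsilon \lesssim \sigma_T^3\alpha/u^2$) inflates the covering number and yields a prefactor of order $u^{2\alpha}$, strictly weaker than the theorem.

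The actual proof avoids the union split altogether. Roughly, one covers $T$ by $N(\epsilon)$ balls and applies Borell--TIS to each \emph{ball supremum as a single event}: $\Prob(\sup_{B_i} Z \geq u) \leq \bar{\Phi}\bigl((u - \Mean\sup_{B_i}Z)/\sigma_T\bigr)$, exploiting that the \emph{mean} of a ball supremum is small, $\Mean\sup_{B_i}Z \lesssim \epsilon\sqrt{\alpha\log(A/\epsilon)}$ by the entropy integral, while its fluctuation scale remains $\sigma_T$. With $\epsilon \asymp \sigma_T^2\sqrt{\alpha}/u$ the resulting shift cost is $\exp\bigl(O(\alpha\sqrt{\log(A/\epsilon)})\bigr)$, subpolynomial relative to $N(\epsilon)$ — in contrast to the $\exp(c\sqrt{\alpha}\,u/\sigma_T)$ your split incurs. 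Even then, squeezing the bound into the clean form $(KAu/(\sigma_T^2\sqrt{\alpha}))^\alpha\bar{\Phi}(u/\sigma_T)$ with a universal $K$ requires the careful multi-scale bookkeeping of the cited source, and it is precisely that bookkeeping which produces the threshold $u \geq \sigma_T^2(1+\sqrt{\alpha})/\epsilon_0$; it is not, as you suggest, the range in which a single-scale net term and oscillation term can be traded off against each other.
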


\begin{rem}\label{r:on_Borel_TIS_inequality}
Due to continuity $\sup_{t \in T} Z_t$ is well defined and a.s. bounded. Moreover, it has finite expectation (see \cite{Adler07}, theorem 2.1.2). Since $Z$ is centered, 
\begin{equation*}
	\Prob\left( \sup_{t \in T} \abs{Z_t}>u \right) \leq 2\Prob\left( \sup_{t \in T} {Z_t}>u\right). 
\end{equation*}
Therefore \eqref{e:bound_on_ex_prob} also gives bound for $\Prob\left( \sup_{t \in T} \abs{Z_t}>u \right)$. \qed
\end{rem}

\subsection{Regular variation}

Recall that a measurable function $f:[A;\infty) \to (0;\infty), A \geq 0$, is called regularly varying at $\infty$ with index $\rho \in \Rd$ whenever $\frac{f(\lambda t)}{f(t)}\tends{t \to \infty} \lambda^\rho$ for any fixed $\lambda \in (0; \infty)$. If it is true that $\rho=0$, one says that $f$ is slowly varying at $\infty$. Each regularly varying function is of the form $f(t) = t^{\rho} g(t)$, where $g$ is slowly varying at $\infty$. 

A dual concept is that of regular (slow) variation at $0$. Namely, $f$ varies regularly at $0$ with index $\rho$ provided $t \mapsto f(t^{-1})$ varies regularly at $\infty$ with index $-\rho$. Because of this duality it suffices to formulate results for regular variation at $\infty$ as it is done below and then restate in an obvious way whenever it is required.

\begin{thm}[\cite{Goldie87}, theorem 1.2.1]\label{t:aux5_rv_on_compacts}
If $L$ is slowly varying then $\frac{L(\lambda t)}{L(t)}\tends{t \to \infty }1$ uniformly on each compact $\lambda$-set in $(0; \infty)$.
\end{thm}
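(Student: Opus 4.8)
The plan is to recognize the statement as the Uniform Convergence Theorem for slowly varying functions and to prove it by first reducing the multiplicative assertion to an additive one and then running a measure-theoretic argument whose engine is the bounded convergence theorem. Put $h(x)=\log L(\e^x)$, well defined and measurable for large $x$ since $L$ is measurable and positive. Slow variation of $L$ is precisely the statement that $h(x+u)-h(x)\tends{x\to\infty}0$ for each fixed $u\in\Rd$, and a compact $\lambda$-set in $(0;\infty)$ corresponds, under $\lambda=\e^u$, to a compact $u$-set in $\Rd$. Since every such set lies in some $[-T;T]$, and the case of negative $u$ follows from the positive one by re-centering at the base point $x+u$ (which still tends to $\infty$ while the increment $-u$ stays in $[0;T]$), it suffices to prove that $\sup_{u\in[0;T]}\abs{h(x+u)-h(x)}\tends{x\to\infty}0$.

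First I would record the basic measure estimate. Fix $\epsilon>0$ and set $B_x=\{u\in[0;2T]\mid \abs{h(x+u)-h(x)}>\epsilon\}$, which is a measurable set because $h$ is measurable. For each fixed $u$ the indicators satisfy $\mathbf{1}_{B_x}(u)\to 0$ as $x\to\infty$, by pointwise convergence of $h(x+u)-h(x)$, so the bounded convergence theorem yields $\abs{B_x}\tends{x\to\infty}0$; equivalently the good set $G_x=[0;2T]\setminus B_x$ has measure tending to $2T$. This is the only place where measurability of $L$ is used, and it is essential.

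Next comes the combinatorial heart of the proof. Suppose uniformity fails: there are $\epsilon>0$, $x_n\to\infty$ and $u_n\in[0;T]$ with $\abs{h(x_n+u_n)-h(x_n)}\ge 5\epsilon$. Using the good sets at the two base points $x_n$ and $x_n+u_n$, I would seek a single $s\in[0;T]$ satisfying simultaneously $\abs{h(x_n+s)-h(x_n)}\le\epsilon$, $\abs{h(x_n+u_n+s)-h(x_n+u_n)}\le\epsilon$ and $\abs{h(x_n+s+u_n)-h(x_n)}\le\epsilon$. Each of these constraints excludes from $[0;T]$ a subset of measure $o(1)$ (the first two directly from the estimate above, the third after translating $G_{x_n}$ by $u_n$ and using $s+u_n\in[0;2T]$), so for large $n$ their intersection has positive measure and such an $s$ exists. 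Telescoping
\begin{equation*}
	h(x_n+u_n)-h(x_n)=\bigl[h(x_n+u_n)-h(x_n+u_n+s)\bigr]+\bigl[h(x_n+s+u_n)-h(x_n+s)\bigr]+\bigl[h(x_n+s)-h(x_n)\bigr]
\end{equation*}
and bounding each bracket by the three constraints (the middle bracket via $\abs{h(x_n+s+u_n)-h(x_n)}\le\epsilon$ together with $\abs{h(x_n+s)-h(x_n)}\le\epsilon$) gives $\abs{h(x_n+u_n)-h(x_n)}\le 4\epsilon$, contradicting the choice $\ge 5\epsilon$.

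The main obstacle is this combinatorial step: one must split $h(x_n+u_n)-h(x_n)$ into increments that are individually controlled, and the decomposition must be arranged so that every increment appearing is of a form covered by one of the three good-set constraints. Shifting the base point from $x_n$ to $x_n+u_n$ is exactly what forces the use of more than one good set together with the translated set $G_{x_n}-u_n$, and verifying that the intersection of the three resulting near-full-measure subsets of $[0;T]$ is non-empty is the delicate point. Once the decomposition and this non-emptiness are in place the estimate is immediate, and transferring back through $h(x)=\log L(\e^x)$ and exponentiating delivers the claimed uniform convergence of $\frac{L(\lambda t)}{L(t)}$ to $1$ on compact $\lambda$-sets.
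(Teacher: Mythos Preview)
Your proof is correct. The paper, however, does not prove this statement at all: it is listed in Section~\ref{s:facts} as an auxiliary fact and simply attributed to \cite{Goldie87}, Theorem~1.2.1, with no argument given. So there is nothing in the paper to compare your proposal against beyond the citation.

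For what it is worth, the argument you give is essentially the classical proof of the Uniform Convergence Theorem as it appears in Bingham--Goldie--Teugels: pass to the additive form $h(x)=\log L(\e^x)$, use dominated convergence to show the bad sets $B_x$ have vanishing measure, and then for a hypothetical bad sequence $(x_n,u_n)$ find, by a measure-intersection argument, an auxiliary $s$ for which a telescoping identity forces $\abs{h(x_n+u_n)-h(x_n)}\le 4\epsilon$, contradicting $\ge 5\epsilon$. Your bookkeeping is sound: the three constraint sets each exclude from $[0;T]$ a set of measure $o(1)$, so their common good part is eventually nonempty, and the triangle-inequality estimate on the middle bracket (combining constraints (a) and (c)) gives exactly the $2\epsilon$ needed for the total $4\epsilon$.
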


\begin{thm}[\cite{Goldie87}, theorem 1.5.6]\label{t:aux6_rv_potter_bounds}
If $L$ is slowly varying then for any chosen constants $A>1,\delta>0$, there exists $c=c(A,\delta)$ such that 
\begin{equation*}
	\frac{L(x)}{L(y)} \leq A \max\left(\left(\frac{y}{x} \right)^{\delta},\left(\frac{y}{x} \right)^{-\delta}\right)\ \forall x,y \geq c.
\end{equation*}
\end{thm}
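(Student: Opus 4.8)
The plan is to check that each of the three families fulfils the hypotheses of Theorem~\ref{t:main1} and then that the extra conditions (L1)--(L3) of Theorem~\ref{t:clt} hold once $\kappa<\frac34$, after which Corollary~\ref{c:estimator} is immediate. Throughout note that $\gamma=\kappa$ in all three cases. \emph{Step 1 (the representation).} Each covariance is already written in the form \eqref{l:coveq} in the introduction, with the stated $\gamma,\sigma^2$ and with $l(0)=1$; it remains to produce $L$ with $l(u)=\frac12\bigl(1+(1+u)^{2\gamma}-(u^\kappa L(u))^2\bigr)$. Put $p(u)\bydef 1+(1+u)^{2\gamma}-2l(u)$. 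For $s<t$, writing $t=s(1+u)$ one gets $\Var(X_t-X_s)=\sigma^2 s^{2\gamma}p(u)$, so $p\ge0$ and $L(u)\bydef u^{-\kappa}\sqrt{p(u)}$ is a well-defined positive function (strict positivity for $u>0$ follows from non-degeneracy of the increments). Substituting the explicit $l$ from \eqref{l:lOfsfbm}--\eqref{l:lOfRL} and simplifying—binomial rearrangement for $S^H$ and $B^{H,K}$, and the substitution $v\mapsto v/u$ in the integral for the Riemann--Liouville process—identifies $u^{2\kappa}L^2$ with $p$ and yields $L^2_{S^H},L^2_{B^{H,K}}$ as stated and the corresponding integral expression $L^2_{LR^H}(u)=1+2\gamma\int_0^{1/u}g(v)\,\d v$ with $g(v)=v^{2\gamma-1}+(v+1)^{2\gamma-1}-2\bigl(v(v+1)\bigr)^{\gamma-1/2}$.

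\emph{Step 2 (slow variation and (L1)).} The key point is that $\lim_{u\to0+}L^2(u)$ exists in $(0;\infty)$; since any function with a positive finite limit at $0$ is slowly varying there, this gives both slow variation and (L1) at once. For $S^H$ and $B^{H,K}$ a binomial expansion of the bracket defining $p$ shows that the constant and first-order terms cancel, so $p(u)=L^2(0)u^{2\gamma}+O(u^2)$ and $L^2(u)=L^2(0)+O(u^{2-2\gamma})\to L^2(0)$ because $2\gamma<2$. For the Riemann--Liouville process the AM--GM inequality gives $g\ge0$, integrability of $g$ near $0$ is clear, and a Taylor expansion at $\infty$ shows that the $v^{2\gamma-1}$ and $v^{2\gamma-2}$ contributions cancel so that $g(v)=O(v^{2\gamma-3})$; hence $\int_0^\infty g<\infty$ for $\gamma<1$ and $L^2_{LR^H}(u)\to L^2(0)\in(0;\infty)$.

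\emph{Step 3 ((L2) and the origin of $\kappa<\frac34$).} Refining the expansions gives in every case $L^2(u)-L^2(0)\sim C\,u^{2-2\kappa}$ as $u\to0+$ (from the $O(u^2)$ remainder for $S^H,B^{H,K}$, and from the tail $\int_{1/u}^\infty g\sim c\,u^{2-2\kappa}$ for the Riemann--Liouville process). Consequently $L(u)-L(0)=O(u^{2-2\kappa})$, which is $o(\sqrt u)$ exactly when $2-2\kappa>\frac12$, i.e. $\kappa<\frac34$. This is precisely where the restriction enters, and—the leading coefficient $C$ being generically nonzero—it is the binding constraint for the whole statement.

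\emph{Step 4 ((L3) and conclusion).} Write $p(u)=L^2(0)u^{2\kappa}+q(u)$. In all three cases $q$ is real-analytic near $0$ with $q(u)=O(u^2)$ and derivatives bounded on $[0;1]$ (for $S^H,B^{H,K}$ because $q$ is a combination of powers $(a+u)^{2\gamma}$; for the Riemann--Liouville process because expanding the tail integral produces only integer powers of $u$). The expression in (L3) is linear in $p$. Since $\gamma=\kappa$, the prefactors $(1+u)^{2\gamma},(1+2u)^{2\gamma}$ cancel against the geometric rescalings $\frac{u}{1+u},\frac{u}{1+2u}$, reducing the $L^2(0)u^{2\kappa}$-part exactly to $u^{2\kappa}$ times the centred fourth difference of $j\mapsto j^{2\kappa}$, which is bounded by $c\,u^{2\kappa}k^{2\kappa-4}$. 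For the $q$-part, representing $\Delta^{(2)}q$ by iterated integrals of $q''$ and using $ku\le1$ gives a bound $c\,u^{4}\le c\,u^{2\kappa}k^{2\kappa-4}$. Dividing by $u^{2\kappa}$ establishes (L3) with $\zeta=4-2\kappa>\frac12$ for every $\kappa\in(0;1)$. Thus (L1)--(L3) hold whenever $\kappa<\frac34$, Theorem~\ref{t:clt} applies, and Corollary~\ref{c:estimator} follows. I expect the main obstacle to be the uniform-in-$(k,n,u)$ estimate in this last step; the remaining work is expansion bookkeeping, and the effective restriction $\kappa<\frac34$ is forced by (L2).
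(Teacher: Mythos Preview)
Your proposal does not address the stated theorem at all. The statement you were asked to prove is Potter's bound (Theorem~\ref{t:aux6_rv_potter_bounds}): for a slowly varying $L$ and any $A>1,\delta>0$ there exists $c$ with $L(x)/L(y)\le A\max((y/x)^{\delta},(y/x)^{-\delta})$ for $x,y\ge c$. The paper does not supply a proof of this result; it is quoted verbatim from \cite{Goldie87}, Theorem~1.5.6, and is used only as an auxiliary tool (in the proof of Theorem~\ref{t:main1}, to control $L\bigl(\tfrac{(t-c)u_n}{1+u_nc}\bigr)/L(u_n)$).

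What you have written is instead a sketch of a proof of Proposition~\ref{p:sfBm_and_bfBm}, the statement that the sub-fractional, bi-fractional and Riemann--Liouville families admit the representation of Theorem~\ref{t:main1} and satisfy (L1)--(L3) when $\kappa<3/4$. None of your four steps bears on Potter's bound: you never discuss an arbitrary slowly varying function, nor do you derive the inequality $L(x)/L(y)\le A\max((y/x)^{\pm\delta})$. If your intention was indeed to prove Potter's bound, the entire argument is missing; the standard proof goes through the Karamata representation $L(x)=c(x)\exp\bigl(\int_a^x \varepsilon(t)t^{-1}\d t\bigr)$ with $c(x)\to c>0$ and $\varepsilon(x)\to0$, and is unrelated to anything in your write-up. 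If instead you meant to prove Proposition~\ref{p:sfBm_and_bfBm}, you have matched the wrong statement and should resubmit against that proposition.
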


\subsection{Limit theorems}\label{ss:limit_theorems}

\begin{thm}[\cite{Surgailis11}, theorems 4.1--4.2]\label{t:aux7_IR_CLT}
Let $(Z_t)_{t \in [0;1]}$ be a centered real valued Gaussian process, $H: [0;1] \to (0;1),c:[0;1]\to (0;\infty)$. Assume\footnote{$[x]$ denotes an integer part of $x \in \Rd$;
in the proofs we also make use of $\{x\}=x-[x]$ --- the fractional part of $x$}
\begin{itemize}
\item[(A1)] for any fixed $k \in\{1,2,\dots\}$
\begin{align}
	 \lim_{n \to\infty} \sup_{t \in (0;1)} \sqrt{n}\left\vert \frac{ \Mean Z_{\frac{[nt]+j}{n}}-Z_\frac{[nt]}{n}}{(k/n)^{2H(t)}}-c(t)\right \vert =0; \label{e:bard_A11}\\
    \lim_{n \to\infty} \sup_{t \in (0;1)} \sqrt{n}\ln n\Abs{H(t)-H(t+n^{-1})}=0;\label{e:bard_A12}\\
    \lim_{n \to\infty} \sup_{t \in (0;1)} \sqrt{n}\ln n\Abs{c(t)-c(t+n^{-1})}=0;\label{e:bard_A13}
\end{align}
\item[(A2)] there exist $d>0$ and $\zeta>\frac{1}{2}$ such that $\forall j,k \in \{1,2,\dots,n\}: j \neq k, n \geq1$, it holds
\begin{equation}\label{e:bard_A2}
	\Abs{\Mean \Delta^{(2)} Z_{\frac{k}{n},\frac{1}{n}} \Delta^{(2)} Z_{\frac{j}{n},\frac{1}{n}}}\leq d \abs{j-k}^{-\zeta}\sqrt{\Var \Delta^{(2)} Z_{\frac{k}{n},\frac{1}{n}}\Var \Delta^{(2)} Z_{\frac{j}{n},\frac{1}{n}}}.
\end{equation}
Then
\begin{align*}
	R_n \bydef \frac{1}{n-2} \sum_{k=0}^{n-3}\psi\left(\Delta^{(2)} Z_{\frac{k}{n}},\Delta^{(2)} Z_{\frac{k+1}{n}}\right) \tendsas \int_{0}^{1} \Lambda(H(t)) \d t,\\
    \sqrt{n}\left(R_n-\int_{0}^{1} \Lambda(H(t)) \d t\right) \tendsd \calN\left(0;\int_{0}^{1} \Sigma(H(t)) \d t\right), n \to \infty,
\end{align*}
with $\psi(x,y),\Lambda(x)$ and $\Sigma(x)$ being the same as in the theorem \ref{t:clt}.
\end{itemize}
\end{thm}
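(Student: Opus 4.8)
\emph{Proof proposal.} The plan is to treat the two assertions separately, exploiting the two structural features of the summand: the function $\psi(x,y)=\abs{x+y}/(\abs{x}+\abs{y})$ takes values in $[0;1]$ and is jointly $0$-homogeneous, while the array of second-order increments $\left(\Delta^{(2)}Z_{\frac{k}{n},\frac{1}{n}}\right)_k$ is a centered Gaussian field whose local structure is pinned down by (A1) and whose long-range correlations are controlled by (A2). First I would standardise, setting $\xi_{k,n}=\Delta^{(2)}Z_{\frac{k}{n},\frac{1}{n}}/\sqrt{\Var \Delta^{(2)}Z_{\frac{k}{n},\frac{1}{n}}}$; by $0$-homogeneity the summand depends on the pair of increments only through the unit-variance vector, hence, up to the variance mismatch, only through the local correlation $r_n(k)=\Cov(\xi_{k,n},\xi_{k+1,n})$. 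A direct Gaussian computation shows that for a centered bivariate Gaussian pair with equal variances and correlation $r$ one has $\Mean \psi=\lambda(r)$, which identifies $\Lambda(H(t))=\lambda(r(H(t)))$ as the local mean once \eqref{e:bard_A11} forces $r_n(k)\to r(H(t))$ along $k/n\to t$ and the variance ratio $\Var \Delta^{(2)}Z_{\frac{k+1}{n},\frac{1}{n}}/\Var \Delta^{(2)}Z_{\frac{k}{n},\frac{1}{n}}\to 1$.

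For the strong law I would show $\Mean R_n\to \int_0^1\Lambda(H(t))\,\d t$ by recognising the left side as a Riemann sum of $t\mapsto\lambda(r_n([nt]))$ and invoking the uniform local estimate \eqref{e:bard_A11} together with the modulus-of-continuity controls \eqref{e:bard_A12}--\eqref{e:bard_A13}; then I would bound $\Var R_n=O(n^{-1})$ using (A2). The key point is that, because $\psi$ is an even function, $\psi(-x,-y)=\psi(x,y)$, its Hermite rank equals $2$, so the covariance of two summands decays like the \emph{square} of the increment correlation and $\zeta>\frac12$ makes the resulting double sum summable. A Borel--Cantelli argument along a polynomial subsequence, combined with the boundedness of $\psi$ to bridge consecutive $n$, then yields $R_n\tendsas\int_0^1\Lambda(H(t))\,\d t$.

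The central limit theorem is the substantial part. I would expand the centred summand $\psi(\xi_{k,n},\xi_{k+1,n})-\Mean\psi(\xi_{k,n},\xi_{k+1,n})$ into Hermite polynomials of the underlying standardised Gaussian pair; the absence of an odd component (again by evenness) places the leading term in the second Wiener chaos. Asymptotic normality of $\sqrt{n}(R_n-\Mean R_n)$ with variance $\int_0^1\Sigma(H(t))\,\d t$ would then follow from a Breuer--Major/Arcones-type central limit theorem for Gaussian functionals, the required summability $\sum_k\abs{r(k)}^2<\infty$ being exactly what $\zeta>\frac12$ provides. Non-stationarity, namely the dependence of $H$ and $c$ on $t$, I would handle by a blocking argument: partition $[0;1]$ into slowly growing blocks on which $H$ and $c$ are essentially frozen by \eqref{e:bard_A12}--\eqref{e:bard_A13}, apply the stationary CLT with local parameters on each block, and sum, the inter-block covariances being negligible by \eqref{e:bard_A2}. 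Finally the bias is absorbed because the $\sqrt{n}\ln n$-rates in (A1) force $\sqrt{n}\bigl(\Mean R_n-\int_0^1\Lambda(H(t))\,\d t\bigr)\to0$.

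The main obstacle I anticipate is the non-smoothness of $\psi$: it is merely $0$-homogeneous with singularities along $\{x=0\}$, $\{y=0\}$ and $\{x+y=0\}$, so justifying the Hermite expansion, verifying that the chaos variance $\Sigma(x)$ is finite and continuous in $x$, and obtaining the uniform-in-$t$ $L^2$-continuity estimates needed both for the variance-mismatch correction and for the blocking all require genuine analytic work rather than a routine application of existing CLTs. Keeping every localisation error at order $o(n^{-1/2})$ --- which is precisely the role of the sharp conditions \eqref{e:bard_A11}--\eqref{e:bard_A2} --- is the delicate bookkeeping on which the whole argument rests.
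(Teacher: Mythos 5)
This theorem is not proved in the paper at all: it is imported verbatim (with adapted notation) from Bardet and Surgailis \cite{Surgailis11}, theorems 4.1--4.2, and appears in the ``Auxiliary facts'' section precisely so that the paper's own Theorem \ref{t:clt} can invoke it. So there is no in-paper argument to compare yours with; it has to be measured against the cited source. On that score your architecture is essentially the right reconstruction: the $0$-homogeneity of $\psi$ reducing everything to the standardised pair up to a variance-ratio correction, the exact Gaussian identity $\Mean\psi(X,Y)=\lambda(r)$ for a unit-variance pair with correlation $r$ (which is exactly where $\Lambda=\lambda\circ r$ in \eqref{e:Lambda} comes from), the Hermite rank $2$ of the even function $\psi$ so that covariances of summands decay like the \emph{square} of increment correlations and $\zeta>\frac{1}{2}$ in \eqref{e:bard_A2} yields summability, an Arcones/Breuer--Major-type CLT localised by blocking to absorb the $t$-dependence of $H$ and $c$ via \eqref{e:bard_A12}--\eqref{e:bard_A13}, and the observation that the $\sqrt{n}$ and $\sqrt{n}\ln n$ rates in (A1) exist to force $\sqrt{n}\bigl(\Mean R_n-\int_0^1\Lambda(H(t))\,\d t\bigr)\to0$. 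You also correctly flag that the non-smoothness of $\psi$ is only a technical nuisance: boundedness guarantees the $L^2$ Hermite expansion exists, and the real work is in uniform-in-$t$ estimates.

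There is, however, one concrete gap: your almost-sure convergence argument. The quantity $R_n$ is a functional of a \emph{triangular array} --- the summands $\psi\bigl(\Delta^{(2)}Z_{\frac{k}{n},\frac{1}{n}},\Delta^{(2)}Z_{\frac{k+1}{n},\frac{1}{n}}\bigr)$ for different $n$ are built from entirely different increments, not partial sums of one fixed sequence. Consequently ``Borel--Cantelli along a polynomial subsequence, combined with the boundedness of $\psi$ to bridge consecutive $n$'' does not go through: boundedness only gives the useless bound $\abs{R_m-R_{n_j}}\leq 2$ for intermediate indices $m\in(n_j;n_{j+1})$, and there is no monotonicity and no shared summands with which to interpolate, unlike the classical situation for empirical means of a single sequence. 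The repair --- and what the cited proof effectively rests on --- is a higher-moment estimate, e.g.\ $\Mean\left(R_n-\Mean R_n\right)^4=O(n^{-2})$, obtained from the chaos decomposition together with \eqref{e:bard_A2} (or from hypercontractivity of chaos-dominated bounded functionals), after which Borel--Cantelli runs over \emph{all} $n$ directly and no bridging is needed. With $\Var R_n=O(n^{-1})$ alone, as in your sketch, you only get convergence in probability plus a.s.\ convergence along $n_j=j^2$, which for a triangular array is strictly weaker than the claim.
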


\begin{thm}[\cite{Goldie87}, theorems 8.5.1--8.5.2]\label{t:aux8_ss_fdd}
Let $(Z_t)_{t>0}\in \calF_D$ and $f:(0;\infty) \to (0;\infty), g:(0; \infty) \to \Rd$ be measurable. Suppose that there exists $Y \in \calF_D$ such that:
\begin{itemize}
\item[(i)] $Y_1$ is non-degenerate;
\item[(ii)] $\left(\frac{Z_{ \tau u}-g(u)}{f(u)}\right)_{\tau>0} \fdd Y, u \to \infty$.
\end{itemize}
Then there exist constants $\rho,b,c \in \Rd$, such that $f$ is regularly varying with index $\rho$ and $\forall t>0$
\begin{equation}\label{e:aux8_ss_limit}
	(Y_{t \tau})_{\tau>0} \stackrel{\mathrm{fdd}}{=} \begin{cases}
		&(t^{\rho}(Y_\tau - c)+c)_{\tau>0}, \rho \neq0;\\
        &(Y_1 + b \ln \tau)_{\tau>0}, \rho = 0.
	\end{cases}
\end{equation}
In other words, $Y$ is self-similar with index $\rho.$
\end{thm}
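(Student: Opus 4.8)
The plan is to reduce the statement to two classical facts: the convergence of types theorem and the characterization of regularly varying functions. The pivotal observation is that hypothesis (ii) is stable under an extra dilation of the time argument. Indeed, replacing $u$ by $su$ for a fixed $s>0$ leaves the limit $Y$ unchanged, and this produces two genuinely different normalizations of the same family $(Z_{s\tau u})$. Forcing these to be compatible is exactly the kind of situation the convergence of types theorem governs, and the resulting multiplicative functional equation for the scaling is precisely the defining relation of regular variation.

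To make this explicit I would fix $s>0$ together with a finite set $\tau_1,\dots,\tau_m>0$. Substituting $s\tau_i$ for $\tau$ in (ii) gives $\bigl(\tfrac{Z_{s\tau_i u}-g(u)}{f(u)}\bigr)_{i=1}^m \tendsd (Y_{s\tau_i})_{i=1}^m$ as $u\to\infty$, while the algebraic identity
\begin{equation*}
	\frac{Z_{s\tau_i u}-g(u)}{f(u)}=\frac{f(su)}{f(u)}\cdot\frac{Z_{s\tau_i u}-g(su)}{f(su)}+\frac{g(su)-g(u)}{f(u)}
\end{equation*}
exhibits the left side as an affine image of a family whose inner factor converges (fdd) to $(Y_{\tau_i})$ by (ii). Taking first $m=1,\tau_1=1$, so that the inner limit is the non-degenerate $Y_1$, the convergence of types theorem produces the finite positive limits
\begin{equation*}
	A(s)=\lim_{u\to\infty}\frac{f(su)}{f(u)},\qquad B(s)=\lim_{u\to\infty}\frac{g(su)-g(u)}{f(u)},
\end{equation*}
together with $Y_s\eqd A(s)Y_1+B(s)$. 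Reinstating general $\tau_i$ and passing to the limit then upgrades this to the process-level relation
\begin{equation*}
	(Y_{s\tau})_{\tau>0}\stackrel{\mathrm{fdd}}{=}(A(s)Y_\tau+B(s))_{\tau>0}.
\end{equation*}

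Since $A(s)\in(0;\infty)$ exists for every $s>0$, the function $f$ is regularly varying. As a pointwise limit of the measurable maps $s\mapsto f(su)/f(u)$, $A$ is measurable and satisfies $A(st)=A(s)A(t)$, so the characterization theorem for regular variation yields $A(s)=s^{\rho}$ for some $\rho\in\Rd$. Feeding this into the displayed fdd relation and evaluating $Y_{st\tau}$ in the two possible orders gives the functional equation $B(st)=s^{\rho}B(t)+B(s)$; comparing the additive shifts attached to the non-degenerate $Y_\tau$ (which determine them uniquely) forces $B(s)=c(1-s^{\rho})$ when $\rho\neq0$ and the Cauchy equation $B(st)=B(s)+B(t)$, hence $B(s)=b\ln s$, when $\rho=0$. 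Substituting these back reproduces the two cases of \eqref{e:aux8_ss_limit}, so $Y$ is self-similar with index $\rho$.

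The hard part will be the convergence of types step, specifically ruling out the degenerate alternative $f(su)/f(u)\to0$. Boundedness of the ratios is easy: an unbounded scaling cannot be recentred to a proper limit once $Y_1$ is non-degenerate, so no subsequence of $f(su)/f(u)$ tends to $\infty$. To exclude the value $0$ I would argue by contradiction. If $f(s_0u)/f(u)\to0$ along some subsequence, then in the identity above the first term vanishes in distribution and $\bigl(\tfrac{Z_{s_0\tau u}-g(u)}{f(u)}\bigr)$ can only converge if it converges to a deterministic value, and this simultaneously for every $\tau>0$. But $s_0\tau$ sweeps out all of $(0;\infty)$, so $Y$ would be a.s. deterministic on the whole ray, contradicting the non-degeneracy of $Y_1$ in (i). Hence every subsequential limit of $f(su)/f(u)$ lies in $(0;\infty)$, and the types theorem then pins the limit down uniquely, which is exactly what the rest of the argument needs. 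The remaining measurability and functional-equation computations are routine once these limits are known to exist.
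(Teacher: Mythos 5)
The paper does not actually prove this statement: it is quoted (in adapted form) from Bingham--Goldie--Teugels, Theorems 8.5.1--8.5.2, i.e.\ Lamperti's classical theorem, so your attempt must be measured against the classical proof --- and for the most part you reconstruct it faithfully. The dilation identity, the convergence-of-types step, and the Cauchy functional equations for $A$ and $B$ are exactly the standard route. You also correctly identified and handled the crux: the sweep argument ruling out $f(s_0u)/f(u)\to0$ along a subsequence (which would make $Y_{s_0\tau}$ deterministic for every $\tau$, hence $Y_1$ deterministic at $\tau=1/s_0$) is the right move, and together with the tightness argument excluding $\infty$ it shows that every $Y_s$ is of the form $aY_1+b$ with $a>0$, hence non-degenerate, so the types theorem applies cleanly and yields full limits $A(s)\in(0;\infty)$, $B(s)\in\Rd$. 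Measurability plus $A(st)=A(s)A(t)$ gives $A(s)=s^\rho$ and $f\in RV(\rho)$, and for $\rho\neq0$ your computation $B(s)=c(1-s^\rho)$ reproduces the first case of \eqref{e:aux8_ss_limit} in full.

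There is, however, a genuine gap in the $\rho=0$ branch. What your functional-equation step yields is $(Y_{t\tau})_{\tau>0}\stackrel{\mathrm{fdd}}{=}(Y_\tau+b\ln t)_{\tau>0}$, whereas the statement asserts $(Y_{t\tau})_{\tau>0}\stackrel{\mathrm{fdd}}{=}(Y_1+b\ln\tau)_{\tau>0}$: the right-hand side involves a \emph{single} random variable, so the claim is that all increments of $Y$ are a.s.\ deterministic, $Y_\tau=Y_1+b\ln\tau$ a.s. This is strictly stronger than your relation and does not follow from it formally: take $Y_\tau=S_{\ln\tau}+b\ln\tau$ with $S$ any non-degenerate stationary process; this satisfies your relation with $A\equiv1$, $B(s)=b\ln s$, but not the display. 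The missing input is the standing hypothesis $Y\in\calF_D$. Since $Y$ is a random element of $D[0;\infty)$, the limit $W=\lim_{\tau\to0+}Y_\tau$ exists a.s. First, your marginal identity $Y_s\eqd Y_1+b\ln s$ then forces $b=0$, since otherwise $Y_s$ could not converge weakly as $s\to0+$. Second, the fdd log-stationarity extends to countable coordinate families, giving $(Y_{c\tau},(Y_{c/n})_{n\geq1})\stackrel{\mathrm{fdd}}{=}(Y_\tau,(Y_{1/n})_{n\geq1})$ and hence $(Y_{c\tau},W)\eqd(Y_\tau,W)$ for all $c>0$; letting $c\to0+$ yields $(Y_\tau,W)\eqd(W,W)$, i.e.\ $Y_\tau=W$ a.s., which is precisely the degenerate display. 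You should append this (or an equivalent) argument. It is worth noting that in the only place the paper invokes the theorem (Step~1 of the proof of Theorem \ref{t:main2}) just the marginal identity $Y_\tau\eqd Y_1+b\ln\tau$ is used, and that much your proof does deliver.
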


\section{Proofs}\label{s:proofs}

\begin{proof}[Proof of theorem \ref{t:main1}]
By self similarity of $X$
\begin{equation*}
	\left(Z_{\tau}^{t,u}\right)\eqd t^{\gamma}\left(\frac{X_1-X_{1+\frac{u}{t}\tau}}{u^\kappa L(u)}\right)=
    t^{\gamma-\kappa}\left(\frac{X_1-X_{1+\frac{u}{t}\tau}}{\left(\frac{u}{t}\right)^\kappa L\left(\frac{u}{t}\right)}\right)\frac{L\left(\frac{u}{t}\right)}{L(u)}=
    t^{\gamma-\kappa}\frac{L\left(\frac{u}{t}\right)}{L(u)}(Z_{\tau}^{1,\frac{u}{t}}).
\end{equation*}
Therefore taking into account slow variation of $L$ it suffices to prove theorem for $t=1$. In what follows we accomplish this by checking that \emph{(i)--(ii)} of theorem \ref{t:aux1_weak_conv} hold. For short we omit time parameter and write $Z^{u}_\tau$ instead of $Z_\tau^{1,u}$.

\emph{(i).} Fix\footnote{to avoid inconsistencies put $0 \cdot L(0) \bydef 0$} $0<\tau_1\leq \tau_2<\infty$. Then $\Mean Z_{\tau_1}^{u}=\Mean Z_{\tau_2}^{u}=0$ and
\begin{multline*}
	\frac{u^{2 \kappa}L^2(u)}{\sigma^2}\Cov(Z_{\tau_1}^{u},Z_{\tau_2}^{u})=\\
    \frac{1}{\sigma^2}\Mean(X_1-X_{1+\tau_1 u})(X_1-X_{1+\tau_2 u})=\\
    1- l(\tau_1 u)-l(\tau_2 u)+(1+\tau_1 u)^{2 \gamma}l \left(\frac{( \tau_2- \tau_1)u}{1+ \tau_1 u}\right)=\\
    -\frac{1}{2}\Bigg[
    (1+\tau_1u)^{2 \gamma}+(1+\tau_2u)^{2 \gamma}-u^{2 \kappa } \left( \tau_1^{2 \kappa} L^2(\tau_1u)+\tau_2^{2 \kappa} L^2(\tau_2u)\right)-\\
    (1+ \tau_1 u)^{2 \gamma}\left( 1+\left(\frac{1+\tau_2u}{1+\tau_1u}\right)^{2 \gamma} - \left( \frac{(\tau_2-\tau_1)u}{1+\tau_1u}\right)^{2 \gamma} L^2\left( \frac{(\tau_2-\tau_1)u}	      	  {1+\tau_1 u}\right) \right)
    \Bigg]=\\
    \frac{u^{2 \kappa}}{2}\left[ \tau_1^{2 \kappa} L^2(\tau_1u)+\tau_2^{2 \kappa} L^2(\tau_2u) - (\tau_2- \tau_1)^{2 \kappa}(1+\tau_1 u)^{2(\gamma-\kappa)} L^2\left( \frac{(\tau_2-\tau_1)u}	      	  {1+\tau_1 u}\right)\right].
\end{multline*}
Since $L$ varies slowly at 0, theorem \ref{t:aux5_rv_on_compacts} implies
\begin{equation*}
	\frac{L(\tau_i u)}{L(u)} \tends{u \to 0+0}1,i=1,2, \quad \text{and} \quad \frac{L\left(\frac{(\tau_2-\tau_1)u}{1+\tau_1 u}\right)}{L(u)} \tends{u \to 0+0}1.
\end{equation*}
Thus,
\begin{equation*}
	\Cov(Z_{\tau_1}^u,Z_{\tau_2}^u)\tends{u \to 0+0} \frac{\sigma^2}{2}\left( \tau_1^{2 \kappa}+ \tau_2^{2 \kappa}-(\tau_2 - \tau_1)^{2 \kappa} \right)
\end{equation*}
or equivalently, $Z^u \fdd \sigma B^\kappa, u \to 0+0$.

\emph{(ii).} Fix $\epsilon,\delta>0, 0 \leq a < b <\infty$ and any $(u_n)_{n \geq1}:u_n \tends{n \to \infty} 0+0$. For the sake of clarity we split verification of \emph{(ii)} into several steps.

\emph{Step 1.} Let $0 \leq c< d<\infty$ be fixed. Then (because of a.s. continuity of $\tau \longmapsto Z^{u_n}_\tau$)
\begin{multline*}
 \left\{ 
 	\eta>0\ \Big\vert\ \exists s \in [c;d]: \sup_{c \leq \tau <s} \Abs{Z_{\tau}^{u_n}-Z_{c}^{u_n}}< \eta ,\sup_{s \leq \tau \leq d} \Abs{Z_{\tau}^{u_n}-Z_{d}^{u_n}} < \eta
 \right\} \supset \\
 \left\{ 
 	\eta>0 \ \Big\vert\  \sup_{c \leq \tau \leq d} \Abs{Z_{\tau}^{u_n}-Z_{c}^{u_n}}< \eta/2 
 \right\} \cup 
 \left\{
 	\eta>0 \ \Big\vert\  \sup_{c \leq \tau \leq d} \Abs{Z_{\tau}^{u_n}-Z_{d}^{u_n}} < \eta/2
 \right\}\ a.s.
\end{multline*}
Consequently,
\begin{multline*}
	\Prob \left( \Delta (Z^{u_n},[c;d])> \eta \right) \leq \Prob \left( \sup_{c \leq \tau \leq d} \Abs{Z_{\tau}^{u_n}-Z_{c}^{u_n}} \geq \frac{\eta}{2} \right) +
    \Prob \left( \sup_{c \leq \tau \leq d} \Abs{Z_{\tau}^{u_n}-Z_{d}^{u_n}} \geq \frac{\eta}{2} \right)\leq \\
    2 \left[ \Prob \left( \sup_{c \leq \tau \leq d} (Z_{\tau}^{u_n}-Z_{c}^{u_n}) \geq \frac{\eta}{2} \right) + \Prob \left( \sup_{c \leq \tau \leq d} (Z_{\tau}^{u_n}-Z_{d}^{u_n}) \geq \frac{\eta}{2} \right)\right],
\end{multline*}
with the last being true because of the fact that $Z^{u_n}$ is centered (see remark \ref{r:on_Borel_TIS_inequality}). Thus, in order to bound left hand side it suffices to bound each probability on the right hand side. We give a detailed implementation for the first one, since the other one is handled in the same way.

\emph{Step 2.} Let $0 \leq c < d<\infty$ be from the \emph{Step 1} and additionally satisfy 
\begin{equation}\label{e:constraint_on_cd}
	d-c < 4^{-\frac{1}{\kappa}}.
\end{equation}
Define a process $(\xi_t^{n,c,d})_{t \in [c;d]}$ by $\xi_{t}^{n,c,d} = Z_{t}^{u_n}-Z_c^{u_n},t \in[c;d]$.
Then $(\xi_{t}^{n,c,d})$ is centered, continuous and for $t \in [c;d]$, 
\begin{multline*}
	u_n^{2 \kappa}L^2(u_n)\Mean (\xi_{t}^{n,c,d})^2 = \Mean (X_{1+u_nt}-X_{1+u_nc})^2=\sigma^2(1+u_nc)^{2(\gamma - \kappa)}((t-c)u_n)^{2 \kappa}L^2 \left(\frac{(t-c)u_n}{1+u_n c}\right).
\end{multline*}
Since ${t-c} \in [0;1)$ and $u_n \to 0+0$, theorem \ref{t:aux6_rv_potter_bounds} implies that for some $n_0$ it holds
\begin{equation}\label{e:bound_by_potter}
	n \geq n_0 \longrightarrow\left(\frac{L \left(\frac{(t-c)u_n}{1+u_n c}\right)}{L(u)}\right)^2 \leq 2\sigma^{-2} (t-c)^{-\kappa} \vee (t-c)^{\kappa} \leq \frac{2\sigma^{-2}}{(t-c)^\kappa}.
\end{equation}
We can also assume that $n_0$ is chosen so that $(1+u_n c)^{2(\gamma - \kappa)}\leq 2$ for $n \geq n_0$. Then by \eqref{e:constraint_on_cd},
\begin{equation}\label{e:bound_on_s_cd}
	\sigma^2_{[c;d]} \bydef \sup_{t \in [c;d]}\Mean (\xi_{t}^{n,c,d})^2 \leq 4 (d-c)^{\kappa}<1.
\end{equation}
Next, note that for $c\leq s<t \leq d$ it holds $\xi_{t}^{n,c,d}-\xi_{s}^{n,c,d} = Z_t^{u_n}-Z_{s}^{u_n}=\xi_{t}^{n,s,d}$. Hence, the above yields that canonical metric\footnote{for the definition consult subsection \ref{ss:Gaussian_ex_prob}} of $\xi^{n,c,d}$ may be
bounded as follows:
\begin{equation*}
	\rho_{\xi^{n,c,d}}^2(s,t) \leq 4 \abs{t-s}^{\kappa}.
\end{equation*}
Consequently, the smallest number of balls having $\tilde{\epsilon} \in(0;1]$ radius with respect to $\rho_{\xi^{n,c,d}}$ and covering $[c;d]$ satisfies
\begin{equation*}
	N([c;d], \rho_{\xi^{n,c,d}}, \tilde{\epsilon}) \leq \left(\frac{A}{\tilde{\epsilon}}\right)^{\frac{2}{\kappa}},
\end{equation*}
with arbitrary chosen $A \geq 2(d-c)^{\frac{\kappa}{2}}$.
Let $\epsilon_0 = \sigma_{[c;d]}$. Then application of theorem \ref{t:aux4_ex_prob_bound} yields
\begin{equation*}
	\Prob \left( \sup_{t \in[c;d]} {\xi^{n,c,d}} \geq \eta \right) \leq \left(\frac{\kappa}{2}\frac{(K A \eta)^2}{\sigma^4_{[c;d]}}\right)^{1/\kappa} 
    \bar{\Phi}\left(\frac{\eta}{\sigma_{[c;d]}}\right)
\end{equation*}
for any $\eta \geq \sigma_{[c;d]}\left(1+\sqrt[]{\frac{2}{\kappa}}\right)$. In particular, setting $\eta=\theta_\kappa\sqrt{ \sigma_{[c;d]}}, \theta_{\kappa} = \left(1+\sqrt[]{\frac{2}{\kappa}}\right)$, and taking into account the bound \eqref{e:bound_on_s_cd}, one has
\begin{equation}\label{e:final_bound_on_ex_prob}
	\Prob \left( \sup_{t \in[c;d]} {\xi^{n,c,d}} \geq \theta_\kappa\sqrt[]{\sigma_{[c;d]}} \right) \leq \frac{\tilde{K}}{\sigma_{[c;d]}^{\frac{3}{\kappa}}}
    \bar{\Phi}\left(\frac{\theta_\kappa}{\sqrt{\sigma_{[c;d]}}}\right),
\end{equation}
for $n \geq n_0$ and $\tilde{K}=(\frac{\kappa}{2}(K A\theta_\kappa)^{2})^{\frac{1}{\kappa}}$. Finishing this step we note the following.
\begin{itemize}
\item The constant $\tilde{K}$ on the right hand side of \eqref{e:final_bound_on_ex_prob} may be regarded as a universal provided one neglects an obvious dependence on $\kappa$. For this is suffices to assume \eqref{e:constraint_on_cd}. Then taking $A\geq 1>\sigma_{[c;d]}$ we have that the constraint imposed on $A$  by the theorem \ref{t:aux4_ex_prob_bound} automatically holds.
\item $\sigma^2_{[c;d]}=O((d-c)^{\kappa})=o(1),(d-c) \to 0+0$.
\item $n_0$ assuring \eqref{e:final_bound_on_ex_prob} depends on $c$ and\footnote{because of \eqref{e:bound_by_potter} and condition $(1+u_n c)^{2(\gamma-\kappa)} \leq2, n \geq n_0$} $d-c$. It is an increasing function of both, however, in case of $d-c$ assumption \eqref{e:constraint_on_cd} discards the dependence on difference $d-c$.
\end{itemize}

\emph{Step 3.} Let integer $k\geq0 ,m \geq 1 $, be such that $[k;k+m] \supset [a;b]$ and $k$ is the biggest whereas $m$ is the smallest among all having this property. Partition each $[j;j+1],j=k,\dots,k+m-1$ into equal intervals $[t_l^j;t_{l+1}^j],l=0,\dots,q-1$, so that $4(t_{l+1}^j-t_l^j)^{\kappa}< 1 \wedge (\delta\theta_\kappa^{-1})^2$. Then $\forall j,l$
\begin{equation*}
	t_{l+1}^j-t_{l}^j<4^{-\frac{1}{\kappa}} \quad \text{and} \quad \theta_\kappa\sqrt[]{ \sigma_{[t_{l}^j;t_{l+1}^j]}}<\delta.
\end{equation*}
Therefore  application of the results obtained in the previous steps (varying constant value from line to line is denoted by the same letter $K$ as long as its magnitude does not affect the limit) yields 

\begin{multline*}
	\Prob \left( \max_{j,l} \Delta (Z^{u_n},[t_l^j;t_{l+1}^j]>\delta)\right) \leq K \cdot q \cdot m \max_{j,l}\sigma_{[t_l^j;t_{l+1}^j]}^{-\frac{3}{\kappa}}\bar{\Phi}\left(\frac{\theta_{\kappa}}{\sqrt[]{\sigma_{[t_l^j;t_{l+1}^j]}}}\right)\leq\\ 
    K (b-a)\max_{j,l}\sigma_{[t_l^j;t_{l+1}^j]}^{-\frac{5}{\kappa}}\bar{\Phi}\left(\frac{\theta_\kappa}{\sqrt[]{\sigma_{[t_l^j;t_{l+1}^j]}}}\right),
\end{multline*}
since $q^{-1}=t_{l+1}^j-t_l^j$ for all $j,l$ and $m$ is proportional to $(b-a)$. It is clear that $x^{\frac{5}{\kappa}}\bar{\Phi}(\theta_\kappa\sqrt[]{x})\tends{x \to \infty}0$. Hence, if there is a need, one can increase the value of $q$ up to the smallest integer for which the right hand side does not exceed $\epsilon$. Then it remains to pass to the upper limit as $n \to \infty$.
\end{proof}

\begin{proof}[Proof of theorem \ref{t:main2}]
\emph{Step 1.} Fix $t \in(0;\infty)$ and define a random process $(Z^t_\tau)_{\tau>0}$ by
\begin{equation*}
	Z^t_\tau = X_t-X_{t+\frac{1}{\tau}},\tau>0.
\end{equation*}
Let $y \to \infty$. Put $u = y^{-1},f_t(y) = a_t(u)$. Then
\begin{equation*}
	\left(\frac{Z^t_{\tau y}}{f_t(y)}\right)_{\tau>0} \fdd \left( Y_{\frac{1}{\tau}}^t\right)_{\tau>0}.
\end{equation*}
Therefore theorem \ref{t:aux8_ss_fdd} implies that $\left( Y_{\frac{1}{\tau}}^t\right)_{\tau>0}$ is self-similar with some index $\kappa_t\in \Rd$ and $f_t$ is regularly varying with $\kappa_t$. Consequently, $\left( Y_{\tau}^t\right)_{\tau>0}$ is self-similar with index $-\kappa_t$ and $a_t$ is regularly varying at 0 with $-\kappa_t$. Since 
\begin{equation*}
	Y^t_0 \eqd \lim_{u \to 0+0}\frac{X_t-X_{t+0 \cdot u}}{a_t(u)}\equiv 0,
\end{equation*}
$\left( Y_{\tau}^t\right)_{\tau \geq 0}$ is also self-similar with index $-\kappa_t$. Moreover, assumption $a_t(u) \tends{u \to 0+0}0+0$ implies $\kappa_t \leq 0$. In fact, one must necessary
have $\kappa_t<0$. Indeed, if it were true that $\kappa_t = 0$, then by theorem \ref{t:aux8_ss_fdd} it were true that $Y^t_\tau \eqd Y^t_1 + b \ln\tau,\tau>0$. Since the limit of Gaussian process is Gaussian, \emph{fdd} convergence yields convergence of the first two moments. Thus, for any $\tau>0$,
\begin{equation*}
	\Mean \left(\frac{X_t-X_{t+\tau u}}{a_t(u)}\right) \tends{u \to 0+0}\Mean Y^t_{\tau} = 0 \Rightarrow 0 = \Mean Y^t_\tau = \Mean Y_1^t + b \ln \tau = b \ln \tau \Rightarrow
    b = 0 \Rightarrow Y_\tau^t \eqd Y^t_{1}.
\end{equation*}
By assumption, $Y^t_{1}$ is non-degenerate. On the other hand, continuity of the paths on the right yields
\begin{equation*}
	\Prob \left(Y_1^t = 0 \right) = \Prob \left( \lim_{\tau \to 0+0}Y_\tau^t = 0 \right) = 1.
\end{equation*}
Obtained contradiction excludes the case $\kappa_t = 0$. Also note that $c$ in \eqref{e:aux8_ss_limit} is equal to 0 because of the same condition $Y^t_0 = 0$.

\emph{Step 2.} Fix $t \in(0;\infty)$. Then results of \emph{Step 1} yield
\begin{equation*}
	\frac{X_t-X_{t+tu}}{a_t(u)} \tendsd Y^t_t \Rightarrow \Var \left(\frac{X_t-X_{t+tu}}{a_t(u)}\right) \to \Var Y^t_t = t^{-2 \kappa_t} \Var Y^t_{1}.
\end{equation*}
On the other hand by self-similarity of $X$,
\begin{equation*}
	\Var \left(\frac{X_t-X_{t+tu}}{a_t(u)}\right) = t^{2 \gamma}\frac{a_1^2(u)}{a_t^2(u)}\Var \left(\frac{X_{1}-X_{1+u}}{a_1(u)}\right) \sim
    t^{2 \gamma}\frac{a_1^2(u)}{a_t^2(u)}\Var Y^1_{1}.
\end{equation*}
Thus,
\begin{equation*}
	a_t(u) \sim t^{\gamma + \kappa_t} a_1(u)\sqrt[]{\frac{\Var Y^1_1}{\Var Y^t_1}} =
    t^{\gamma + \kappa_t} u^{-\kappa_1}L_1(u)\sqrt[]{\frac{\Var Y^1_1}{\Var Y^t_1}},
\end{equation*}
where $L_1$ is slowly varying at 0 by the \emph{Step 1}. Hence, $\forall t \ \kappa_t=\kappa_1 \bydef  -\kappa $.

\emph{Step 3.} 
\begin{multline*}
	\Var(X_1-X_{1+u})=\sigma^2(1+(1+u)^{2 \gamma}-2 l(u)) \Rightarrow l(u) = \frac{1}{2}\left( 1+(1+u)^{2 \gamma} -\frac{\Var(X_1-X_{1+u})}{\sigma^2}\right).
\end{multline*}
By all above, $\Var(X_1-X_{1+u}) \sim a_1^2(u) \sim u^{2  \kappa}L_1^2(u)$. Therefore $L(u) \bydef \sqrt{\frac{\Var(X_1-X_{1+u})}{\sigma^2 u^{2 \kappa}}}$ varies slowly at 0.

\emph{Step 4}. It remains to prove the last claim. Fix $t \in(0;\infty)$. Note that $c_t Y^t \in \mathrm{Tan}(X,t)$ with any $r_n \downarrow 0$ and $q_n \stackrel{c_t \neq 0}{=} c_t^{-1} a_t(r_n)$. Take arbitrary $\tilde{Y} \in \mathrm{Tan}(X,t)$. If $r_n \downarrow 0, q_n \downarrow 0$, are such that $\left(\frac{X_{t+\tau r_n}-X_{t}}{q_n}\right)_{\tau \geq 0}\tendsd \tilde{Y}$
then Gaussianity yields $\Var\left(\frac{X_{t}-X_{t+t r_n}}{q_n}\right)\tends{n \to \infty} \Var\tilde{Y}_t$. If $\Var\tilde{Y}_t>0$, then 
\begin{multline*}
	t^{2 \gamma}\frac{a_1^2(r_n)}{q_n^2} \Var Y_1^1 \sim t^{2 \gamma}\frac{a_1^{2}(r_n)}{q_n^2}\Var \left(\frac{X_1-X_{1+r_n}}{a_1(r_n)}\right) = 
    \Var \left(\frac{X_t-X_{t+tr_n}}{q_n}\right) \to \Var \tilde{Y}_t>0 \Rightarrow \\
    q_n \sim t^{\gamma}\sqrt{\frac{\Var Y_1^1}{\Var \tilde{Y}_t}} a_1(r_n).
\end{multline*}
Consequently, $\tilde{Y}$ is a constant multiple of $Y^t$. If $\Var \tilde{Y}_t =0$, then by theorem \ref{t:aux2_tangent1} and self-similarity of $X$, $\Var \tilde{Y}_\tau=0$ for all $\tau>0$. Thus, $\tilde{Y}$ is zero multiple of $Y^t$. Summing up, $Y^t$ is a unique tangent process of $X$ at $t$. By theorem \ref{t:aux3_tangent1}, the set of such $t \in(0; \infty)$ for which $Y^t$ is not a scalar multiple of $B^{\kappa}$ has the Lebesgue measure 0. In our case self-similarity of $X$ implies that this set is empty. Indeed, fix arbitrary $t_0$ having property $Y^{t_0} \eqd c_{t_0}B^{\kappa}$ and take any $t_1 \in(0;\infty) \setminus \{t_0 \}$. Then by noting that
\begin{multline*}
	\left(\frac{X_{t_1}-X_{t_1+\tau u}}{a_{t_1}(u)}\right)_{\tau \geq 0} \eqd \left(\frac{t_1}{t_0}\right)^{\gamma}\left(\frac{X_{t_0}-X_{t_0+\tau \frac{u t_0}{t_1}}}{a_{t_1}(u)}\right)_{\tau \geq 0} =\\
    c_{t_0,t_1}(u)\left(\frac{X_{t_0}-X_{t_0+\tau \frac{u t_0}{t_1}}}{a_{t_0}(\frac{t_0 u}{t_1})}\right)_{\tau \geq 0} \tendsd \tilde{c}_{t_0,t_1}c_{t_0} B^{\kappa},u \to 0+0,
\end{multline*}
where $c_{t_0,t_1}(u)\tends{u \to 0+0} \tilde{c}_{t_0,t_1}\in(0;\infty)$, one obtains the claim.
\end{proof}

\begin{proof}[Proof of theorem \ref{t:clt}]
Define a process $(Z_t^T)_{t \in [0;1]}$ by $Z^T_t = X_{T+tT},t \in[0;1]$. Below we show that under assumptions made above, theorem \ref{t:aux7_IR_CLT} applies to $Z^T$ with $H(t) \equiv \kappa$ and $c(t) = (\sigma L (0)(1+t)^{\gamma-\kappa})^2$. Note that in some expressions time argument of $Z^T$ falls into the range of its domain only asymptotically. If this is the case, we do not comment keeping in mind that the mentioned expressions are well defined provided $n$ is large enough. For short we assume that $T=1$ and denote $Z^1$ by $Z$. The case of $T \neq 1$ reduces to this one because of self-similarity.

\smallskip\emph{(A1).} Fix $k \in \{1,2,\dots\}$ and $t \in(0;1)$. Then
\begin{multline}\label{e:var_of_first_diff}
	\Mean \left(Z_{\frac{[nt]+k}{n}}-Z_{\frac{[nt]}{n}}\right)^2=
    \Mean \left(Z_{t-\frac{ \{nt\}-k}{n}}-Z_{t-\frac{\{nt\}}{n}}\right)^2=\\
    \Mean \left(X_{1+t-\frac{ \{nt\}-k}{n}}-X_{1+t-\frac{\{nt\}}{n}}\right)^2=
    \left[ n\left(1+t-\frac{\{nt\}}{n}\right)=\frac{1}{u_n^t}\right]=\\
    (nu_n^t)^{-2 \gamma} \Mean(X_{1+ku_n^t}-X_1)^2=\sigma^2(ku_n^t)^{2 \kappa}(nu_n^t)^{-2 \gamma}L^2(ku_n^t)=\\
    \sigma^2\left(\frac{k}{n}\right)^{2 \kappa}(nu_n^t)^{2(\kappa-\gamma)}L^2(ku_n^t).
\end{multline}
Since $(nu_n^t)^{-1}=1+t-\frac{\{nt\}}{n}=1+t+O\left(\frac{1}{n}\right)$ and $k$ is fixed,
\begin{multline*}
	\frac{\Mean \left(Z_{\frac{[nt]+k}{n}}-Z_{\frac{[nt]}{n}}\right)^2}{\left(\frac{k}{n}\right)^{2 \kappa}}=\sigma^2\left(1+t+O\left(\frac{1}{n}\right) \right)^{2(\gamma - \kappa)}\left(L^2(0)+o(\sqrt[]{ku_n^t}) \right)=\\
    \sigma^2(1+t)^{2(\gamma-\kappa)}\left(1+O\left(\frac{1}{n}\right)\right)\left(L^2(0)+o\left(\frac{1}{\sqrt[]{n}}\right) \right)=\sigma^2L^2(0)(1+t)^{2(\gamma-\kappa)}+o\left(\frac{1}{\sqrt[]{n}}\right).
\end{multline*}
Thus, \eqref{e:bard_A11} holds. \eqref{e:bard_A12} is trivial. Finally \eqref{e:bard_A13} follows easily by noting that $c(t)=\sigma^2L^2(0)(1+t)^{2(\gamma-\kappa)},t \in[0;1]$, is continuously differentiable on $[0;1]$.

\emph{(A2).} Let $j \in \{0,1, \dots, n-2\},u_{n,j}=\frac{1}{n+j}$. Then $ u_{n,j+m}=\frac{u_{n,j}}{1+mu_{n,j}},m\geq0$. Thus, for $k \geq1$,
\begin{multline}\label{e:cov_of_first_diff}
	\sigma^{-2}\Mean \Delta Z_{\frac{j}{n},\frac{1}{n}}\Delta Z_{\frac{j+k}{n},\frac{1}{n}}=
    \sigma^{-2}\Mean \left(X_{1+\frac{j+1}{n}}-X_{1+\frac{j}{n}}\right)\left(X_{1+\frac{j+k+1}{n}}-X_{1+\frac{j+k}{n}}\right)=\\
    \left(1+\frac{j+1}{n}\right)^{2 \gamma}\left( l\left( \frac{\frac{k}{n}}{1+\frac{j+1}{n}}\right)-l\left( \frac{\frac{k-1}{n}}{1+\frac{j+1}{n}}\right)\right)-
    \left(1+\frac{j}{n}\right)^{2 \gamma}\left( l\left( \frac{\frac{k+1}{n}}{1+\frac{j}{n}}\right)-l\left( \frac{\frac{k}{n}}{1+\frac{j}{n}}\right)\right)=\\
    n^{-2 \gamma}\left( 
    	u_{n,j+1}^{-2\gamma}(l(k u_{n,j+1})-l((k-1)u_{n,j+1}))-u_{n,j}^{-2\gamma}(l((k+1)u_{n,j})-l(k u_{n,j}))
    \right)=\\
    \frac{(nu_{n,j})^{-2 \gamma}}{2}\Big[ 
    	(1+u_{n,j})^{2 \gamma}\left( \frac{(1+(k+1)u_{n,j})^{2 \gamma}-(1+k u_{n,j})^{2 \gamma}}{(1+u_{n,j})^{2 \gamma}} - \Delta p_{(k-1)\frac{u_{n,j}}{1+u_{n,j}},\frac{u_{n,j}}{1+u_{n,j}}}\right) - \\
        \left( (1+(k+1)u_{n,j})^{2 \gamma} - (1+ku_{n,j})^{2 \gamma} -  \Delta p_{ku_{n,j},u_{n,j}} \right)
        \Big]  = \\
        \frac{(nu_{n,j})^{-2 \gamma}}{2}\left(
        \Delta p_{ku_{n,j},u_{n,j}} - (1+u_{n,j})^{2 \gamma}\Delta p_{(k-1)\frac{u_{n,j}}{1+u_{n,j}},\frac{u_{n,j}}{1+u_{n,j}}}
    	\right).
\end{multline}
Let $u_n^t,t \in (0;1)$ be as in \emph{(A1)}. Then $u_{n}^{\frac{j}{n}}=u_{n,j}$ and taking $k=1$ in \eqref{e:var_of_first_diff} together with \eqref{e:cov_of_first_diff} yields
\begin{equation*}
\sigma^{-2}\Var \Delta Z_{\frac{j}{n},\frac{1}{n}} = u_{n,j}^{2 \kappa}(nu_{n,j})^{-2 \gamma}L^2(u_{n,j});
\end{equation*}	
\begin{multline*}
    \sigma^{-2}\Var \Delta^{(2)} Z_{\frac{j}{n},\frac{1}{n}} = \sigma^{-2}\left(\Var \Delta Z_{\frac{j+1}{n},\frac{1}{n}}+\Var \Delta Z_{\frac{j}{n},\frac{1}{n}}-2 \Mean \Delta Z_{\frac{j+1}{n},\frac{1}{n}}\Delta Z_{\frac{j}{n},\frac{1}{n}}\right)=\\
    \left(\frac{u_{n,j}}{1+u_{n,j}}\right)^{2 \kappa}\left(\frac{nu_{n,j}}{1+u_{n,j}}\right)^{-2 \gamma}L^{2}\left(\frac{u_{n,j}}{1+u_{n,j}}\right)-\\
    2\frac{(nu_{n,j})^{-2 \gamma}}{2}\left( \Delta p_{u_{n,j},u_{n,j}}-
    (1+u_{n,j})^{2 \gamma}\Delta p_{0,\frac{u_{n,j}}{1+u_{n,j}}}\right)+(u_{n,j})^{2 \kappa}(nu_{n,j})^{-2 \gamma}L^2(u_{n,j})=\\
    (u_{n,j})^{2 \kappa}(nu_{n,j})^{-2 \gamma}\left( 2(1+u_{n,j})^{2( \gamma-\kappa)}L^2\left(\frac{u_{n,j}}{1+u_{n,j}}\right)+2L^2(u_{n,j}) + 2^{2 \kappa}L^2(2 u_{n,j}) \right) \stackrel{n \to \infty}{\sim} \\
    L^2(0)(4+4^{\kappa})(u_{n,j})^{2 \kappa}(nu_{n,j})^{-2 \gamma}.
\end{multline*}
Therefore for $k \in \{2,\dots,n\}:k+j \leq n$,
\begin{multline*}
	\Mean \Delta^{(2)}Z_{\frac{j}{n},\frac{1}{n}}\Delta^{(2)}Z_{\frac{j+k}{n},\frac{1}{n}} {=} 
    \Mean \left( \Delta Z_{\frac{j+1}{n},\frac{1}{n}} - \Delta Z_{\frac{j}{n},\frac{1}{n}}\right) \left( \Delta Z_{\frac{j+k+1}{n},\frac{1}{n}} - \Delta Z_{\frac{j+k}{n},\frac{1}{n}}\right)=\\
    \frac{(nu_{n,j+1})^{-2 \gamma}}{2}\Bigg[
    \Delta p_{ku_{n,j+1},u_{n,j+1}}-(1+u_{n,j+1})^{2 \gamma}\Delta p_{(k-1)\frac{u_{n,j+1}}{1+u_{n,j+1}},\frac{u_{n,j+1}}{1+u_{n,j+1}}}-\\
    \left(\Delta p_{(k-1)u_{n,j+1},u_{n,j+1}}-(1+u_{n,j+1})^{2 \gamma}\Delta p_{(k-2)\frac{u_{n,j+1}}{1+u_{n,j+1}},\frac{u_{n,j+1}}{1+u_{n,j+1}}}\right)
    \Bigg]-\\
   \frac{(nu_{n,j})^{-2 \gamma}}{2}\Bigg[
    \Delta p_{(k+1)u_{n,j},u_{n,j}}-(1+u_{n,j})^{2 \gamma}\Delta p_{k\frac{u_{n,j}}{1+u_{n,j}},\frac{u_{n,j}}{1+u_{n,j}}}-\\
    \left(\Delta p_{ku_{n,j},u_{n,j}}-(1+u_{n,j})^{2 \gamma}\Delta p_{(k-1)\frac{u_{n,j}}{1+u_{n,j}},\frac{u_{n,j}}{1+u_{n,j}}}\right)
    \Bigg]=
-\frac{(nu_{n,j})^{-2 \gamma}}{2}\Big[
\Delta^{(2)} p_{ku_{n,j},u_{n,j}}-\\
2(1+u_{n,j})^{2 \gamma}\Delta^{(2)} p_{(k-1)\frac{u_{n,j}}{1+u_{n,j}},\frac{u_{n,j}}{1+u_{n,j}}}+
(1+2u_{n,j})^{2 \gamma}\Delta^{(2)} p_{(k-2)\frac{u_{n,j}}{1+2u_{n,j}},\frac{u_{n,j}}{1+2u_{n,j}}}
\Big] \stackrel{n \to \infty}{\sim}\\
K\cdot \cdot\sqrt[]{\Var \Delta^{(2)}Z_{\frac{j}{n},\frac{1}{n}} \Var \Delta^{(2)}Z_{\frac{j+k}{n},\frac{1}{n}}} (u_{n,j})^{-2 \kappa}\Big[
\Delta^{(2)} p_{ku_{n,j},u_{n,j}}-\\
2(1+u_{n,j})^{2 \gamma}\Delta^{(2)} p_{(k-1)\frac{u_{n,j}}{1+u_{n,j}},\frac{u_{n,j}}{1+u_{n,j}}}+
(1+2u_{n,j})^{2 \gamma}\Delta^{(2)} p_{(k-2)\frac{u_{n,j}}{1+2u_{n,j}},\frac{u_{n,j}}{1+2u_{n,j}}}
\Big],
\end{multline*}
where $K = K(\kappa,j,k,n)$ is uniformly bounded for all $j,k,n$.
\end{proof}

\begin{proof}[Proof of corollary \ref{c:estimator}.] 
To prove the corollary simply apply the Delta method.
\end{proof}

In order to prove proposition \ref{p:sfBm_and_bfBm} we need the following lemma. We believe that it is proved elsewhere in a form suitable for our needs, however, we couldn't find a corresponding reference. Therefore we provide a proof for completeness.
\begin{lem}\label{l:diff2_asymp}
Let $x>0$ be fixed and $g^x:[0;\infty) \to \Rd$ be defined by 
\begin{equation*}
	g^x(y) = (y+2)^x-2(y+1)^x + y^x.	
\end{equation*}
Then one can choose $y_0>0$ such that $\forall y \geq y_0 \ \abs{\Delta^{(2)} g^x_{y,1}} \leq \frac{C_x}{y^{4-x}}$. Moreover, for any fixed $\epsilon \in (0;1)$, one can choose $K_\epsilon$ such that
\begin{equation}\label{e:bound_on_C_x}
	\forall x \in \{4,5, \dots\} \ C_x \leq K_\epsilon (1+\epsilon)^x.
\end{equation}
\end{lem}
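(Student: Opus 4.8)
The plan is to recognise the iterated second difference as a fourth-order forward difference of a pure power and then estimate that single object. Writing $h(z)=z^x$, the definition of $\Delta^{(2)}$ gives $g^x(y)=h(y+2)-2h(y+1)+h(y)=\Delta^{(2)}h_{y,1}$, and applying $\Delta^{(2)}$ once more (the step remaining $1$) yields
\begin{equation*}
\Delta^{(2)}g^x_{y,1}=h(y+4)-4h(y+3)+6h(y+2)-4h(y+1)+h(y),
\end{equation*}
the fourth forward difference of $h$ with unit step. This identity is the only structural input; everything afterwards is a quantitative estimate of this fourth difference.

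Next I would invoke the mean value theorem for finite differences (equivalently, the integral form of the Taylor remainder): since $h\in C^4(0;\infty)$, there is a $\xi=\xi(x,y)\in(y;y+4)$ with $\Delta^{(2)}g^x_{y,1}=h^{(4)}(\xi)$, where $h^{(4)}(z)=x(x-1)(x-2)(x-3)z^{x-4}$. Taking absolute values,
\begin{equation*}
\Abs{\Delta^{(2)}g^x_{y,1}}=\abs{x(x-1)(x-2)(x-3)}\,\xi^{x-4}.
\end{equation*}
Here the sign of $x-4$ decides how to return from $\xi$ to $y$. If $0<x\le 4$ the map $z\mapsto z^{x-4}$ is non-increasing, so $\xi^{x-4}\le y^{x-4}$ for every $y>0$ and one may take $C_x=\abs{x(x-1)(x-2)(x-3)}$ with any $y_0>0$. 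If $x>4$ the map is non-decreasing and $\xi^{x-4}\le(y+4)^{x-4}=y^{x-4}(1+4/y)^{x-4}\le y^{x-4}(1+4/y_0)^{x-4}$ for $y\ge y_0$, which gives the first assertion with
\begin{equation*}
C_x=x(x-1)(x-2)(x-3)\,(1+4/y_0)^{x-4}.
\end{equation*}

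For the uniform growth bound I would restrict to integer $x\ge 4$ and fix $y_0=y_0(\epsilon)$ so large that $1+4/y_0\le\sqrt{1+\epsilon}$, i.e.\ $y_0\ge 4/(\sqrt{1+\epsilon}-1)$; this single choice serves all $x$ simultaneously, since $(1+4/y)^{x-4}\le(1+4/y_0)^{x-4}$ holds for each $x$ once $y\ge y_0$. Bounding the polynomial factor by $x^4$ gives $C_x\le x^4(1+\epsilon)^{(x-4)/2}$, and writing $(1+\epsilon)^{(x-4)/2}=(1+\epsilon)^{-2}(1+\epsilon)^{x/2}$ one concludes
\begin{equation*}
C_x\le\Big(\sup_{x\ge 4}x^4(1+\epsilon)^{-x/2-2}\Big)(1+\epsilon)^{x}=:K_\epsilon(1+\epsilon)^x,
\end{equation*}
with $K_\epsilon<\infty$ because polynomial growth is dominated by the exponential $(1+\epsilon)^{x/2}$.

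The only genuine obstacle is the second claim \eqref{e:bound_on_C_x}, where $y_0$ must be chosen independently of $x$ while still taming the factor $(1+4/y_0)^{x-4}$, which itself grows geometrically in $x$. The resolution above splits the admissible geometric budget $(1+\epsilon)^x$ into two halves: one half is spent on $y_0$ by forcing $1+4/y_0\le\sqrt{1+\epsilon}$, and the other half absorbs the prefactor $x(x-1)(x-2)(x-3)$ through the elementary domination of polynomials by exponentials. The passage from the iterated difference to a single derivative evaluation is routine once the fourth-difference identity is in place, so no further delicate estimation is required.
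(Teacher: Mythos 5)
Your proof is correct, but it takes a genuinely different route from the paper. The paper expands each of the five terms of $\Delta^{(2)} g^x_{y,1}=(y+4)^x-4(y+3)^x+6(y+2)^x-4(y+1)^x+y^x$ via the binomial series in powers of $1/y$ (requiring $y_0>4$ for convergence), observes that the coefficients $4^j-4\cdot 3^j+6\cdot 2^j-4\cdot 1^j+0^j$ vanish for $j=0,1,2,3$, and bounds the tail $\sum_{j\geq 4}\binom{x}{j}(4/y)^j a_j$; for integer $x\geq 4$ this tail is a finite binomial sum dominated by $(1+4/y_0)^x$, whence \eqref{e:bound_on_C_x} after choosing $4y_0^{-1}<\epsilon$. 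You instead identify $\Delta^{(2)}g^x_{y,1}$ as the fourth forward difference of $h(z)=z^x$ with unit step and invoke the mean value theorem for finite differences, collapsing the whole expression to $h^{(4)}(\xi)=x(x-1)(x-2)(x-3)\xi^{x-4}$ with $\xi\in(y;y+4)$, followed by an elementary monotonicity case split on the sign of $x-4$. Both arguments exploit the same structural fact --- a fourth difference annihilates polynomials of degree at most three --- but you realize it through the order of the derivative rather than through cancellation of series coefficients. Your version is shorter, dispenses with the convergence bookkeeping and the bound on $\sup_j\abs{a_j}$, works for every $y_0>0$ when $0<x\leq 4$, and produces the explicit constant $C_x=\abs{x(x-1)(x-2)(x-3)}\,(1+4/y_0)^{(x-4)\vee 0}$. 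For the uniform bound your splitting of the geometric budget, forcing $1+4/y_0\leq\sqrt{1+\epsilon}$ so that half the exponential absorbs $(1+4/y_0)^{x-4}$ and the other half dominates the polynomial prefactor $x^4$, is sound and correctly keeps $y_0$ independent of $x$, which is exactly the uniformity the application in proposition \ref{p:sfBm_and_bfBm} needs; the finite-difference mean value theorem you cite is classical (equally spaced divided differences), so invoking it without proof is legitimate.
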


\begin{proof}
It is straightforward to check that $\Delta^{(2)} g^x_{y,1} = (y+4)^{x}-4(y+3)^x + 6(y+2)^x-4(y+1)^x+y^{x}$. Take $y\geq y_0>4$. Then Taylor's expansion in a neighborhood of zero yields\footnote{$\binom{x}{j} \bydef \frac{x(x-1)\cdots (x - j+1)}{j!},j \geq 1; \binom{x}{0} \bydef 1$} 
\begin{multline*}
	y^{-x}\Delta^{(2)} g^x_{y,1} = \sum_{j = 0}^\infty \binom{x}{j} y^{-j}\left(4^{j}-4 \cdot 3^{j} + 6 \cdot 2^{j} - 4 \right) +1 = \\
    \left[\text{terms corresponding to $j = 0,1,2,3$ cancel out}\right] =\\
    \sum_{j = 4}^\infty \binom{x}{j} y^{-j}\left(4^{j}-4 \cdot 3^{j} + 6 \cdot 2^{j} - 4 \right) = 
    \sum_{j = 4}^\infty \binom{x}{j} \left(\frac{4}{y}\right)^{j}a_j, 
\end{multline*}
where $a_j=\left(1-4 \cdot \left(\frac{3}{4}\right)^{j} + 6 \cdot \left(\frac{1}{2}\right)^{j} - \left(\frac{1}{4}\right)^{j-1} \right),j \geq 4$. Therefore,
\begin{equation*}
	\Abs{y^{-x}\Delta^{(2)} g^x_{y,1}}	 < \left(\frac{4}{y}\right)^{4}\sup_{j \geq 4} \abs{a_j} \sum_{j=4}^\infty\left \vert\binom{x}{j} \right \vert\left(\frac{4}{y_0}\right)^{j-4}.
\end{equation*}
The series on the right hand side converges and does not depend on $y$ whereas $\sup_{j \geq 4} \abs{a_j}$ is bounded. Suppose $x \in \{4,5,\dots \}$. Then
\begin{equation*}
	 \sum_{j=4}^\infty\binom{x}{j} \left(\frac{4}{y_0}\right)^{j-4} =
      \frac{y_0^4}{4^4}\sum_{j=4}^x\binom{x}{j}  \left(\frac{4}{y_0}\right)^{j}<\frac{y_0^4}{4^4}\left(1+\frac{4}{y_0}\right)^x,
\end{equation*}
and for a fixed $\epsilon \in (0;1)$ it suffices to choose $y_0$ such that $4y_0^{-1}< \epsilon$.
\end{proof}

\begin{proof}[Proof of proposition \ref{p:sfBm_and_bfBm}.] Since the proof of proposition is nothing more but a careful application of Taylor's formula, we give a detailed exposition for the sfBm. In case of other families it is a repetition of the latter with some necessary changes. For short we omit a subscript denoting that the quantities under consideration correspond to the sfBm, e.g. we write $R,l,\dots$ etc. instead of $R_{S^H},l_{S^H},\dots$. $H \in(0;1)$ is assumed to be fixed.

\emph{Step 1.} Let $t,s,h>0$ and $u \in [0;1)$. Then 
\begin{multline*}
	R(s,t)= s^{2H}+t^{2H}-\frac{1}{2}\left( \abs{s+t}^{2H}+\abs{t-s}^{2H}\right) \imply\\
    R(t,t+h)=t^{2H}+(t+h)^{2H} - \frac{1}{2}\left( (2t+h)^{2H} + h^{2H}\right) =\\
    t^{2H}\left( 1+ \left(1+\frac{h}{t}\right)^{2H} - \frac{1}{2}\left(\left(2+\frac{h}{t}\right)^{2H} + \left(\frac{h}{t}\right)^{2H} \right) \right) \imply \\
    l(u)=\frac{1}{2-2^{2H-1}}
    \left( 
    	1+ (1+u)^{2H} - \frac{1}{2}\left( (2+u)^{2H} + u^{2H}\right)
    \right) \imply \\
    l(u) - \frac{1}{2}\left(1+(1+u)^{2H}\right) = -\frac{1}{2}u^{2 \kappa}L^{2}(u) =\\
    \frac{2^{2H-1}}{2-2^{2H-1}}
    \left( 
    	\frac{1+(1+u)^{2H}}{2}-\left(1+\frac{u}{2} \right)^{2H}-\left(\frac{u}{2}\right)^{2H}
    \right) = \\
    \frac{2^{2H-1}}{2-2^{2H-1}}\left(
    	\sum_{k=1}^\infty\binom{2H}{k}u^{k}\left(\frac{1}{2}-\frac{1}{2^k} \right)- \left(\frac{u}{2}\right)^{2H}
    \right)=-\frac{u^{2H}}{4-4^{H}}\left(1+O\left(u^{2(1-H)}\right) \right).
\end{multline*}
with the last two equalities due to Taylor's expansion in the neighborhood of 0. Hence, under assumptions made, \emph{(L1)--(L2)} hold.

\emph{Step 2.} By \emph{Step 1},
\begin{equation*}
	p(u)= u^{2 \kappa}L^2(u)=\frac{2^{2H}}{2-2^{2H-1}}\left(
    \left(\frac{u}{2}\right)^{2H}+\left(1+\frac{u}{2}\right)^{2H}-\frac{1+(1+u)^{2H}}{2}
\right).    
\end{equation*}
Since constant does not affect the order of differences $\Delta p,\Delta^{(2)} p$, it suffices to show that condition \emph{(L3)} of the theorem \ref{t:clt} applies to $\tilde{p} \bydef \left(\frac{2^{2H}}{2-2^{2H-1}}\right)^{-1}p$. Let $g^{x}$ be the same as in lemma \ref{l:diff2_asymp}. Then
\begin{multline*}
	\Delta \tilde{p}_{ku,u} = \left(\frac{u}{2}\right)^{2H}\left((k+1)^{2H}-k^{2H}\right) +\\ \left(\left(1+(k+1)\frac{u}{2} \right)^{2H}-\left(1+k\frac{u}{2} \right)^{2H}\right)-\frac{1}{2}
    \left(\left(1+(k+1){u} \right)^{2H}-\left(1+k{u} \right)^{2H}\right) \imply \\
    \Delta^{(2)} \tilde{p}_{ku,u} = \left(\frac{u}{2}\right)^{2H}g^{2H}(k) + \tilde{g}_k\left(\frac{u}{2}\right)-\frac{1}{2}\tilde{g}_k\left({u}\right),
\end{multline*}
where $\tilde{g}_k\left({u}\right) = (1+(k+2)u)^{2H}-2(1+(k+1)u)^{2H}+(1+ku)^{2H}$. Next, note that
\begin{multline*}
	(1+ju)^{2H}\tilde{g}_{k-j}\left(\frac{u}{1+ju}\right) = (1+ju)^{2H}\Bigg[ \left(1+(k-j+2)\frac{u}{1+ju}\right)^{2H}-\\2\left(1+(k-j+1)\frac{u}{1+ju}\right)^{2H}+
    \left(1+(k-j)\frac{u}{1+ju}\right)^{2H}\Bigg] =\tilde{g}_k(u);
\end{multline*}
and
\begin{multline*}
    (1+ju)^{2H}\tilde{g}_{k-j}\left(\frac{u}{2(1+ju)}\right) = 
    (1+ju)^{2H}\Bigg[ \left(1+(k-j+2)\frac{u}{2(1+ju)}\right)^{2H}-\\
    2\left(1+(k-j+1)\frac{u}{2(1+ju)}\right)^{2H}+\left(1+(k-j)\frac{u}{2(1+ju)}\right)^{2H}\Bigg] = \tilde{g}_{k+j}\left(\frac{u}{2}\right).
\end{multline*}
Therefore
\begin{multline*}
	\Delta^{(2)} \tilde{p}_{ku,u} - 2(1+u)^{2H}\Delta^{(2)} \tilde{p}_{(k-1)\frac{u}{1+u},\frac{u}{1+u}}+(1+2u)^{2H}\Delta^{(2)} \tilde{p}_{(k-2)\frac{u}{1+2u},\frac{u}{1+2u}} =\\ \left(\frac{u}{2}\right)^{2H}\Delta^{(2)} g^{2H}_{k-2,1} + 
    \tilde{g}_k \left(\frac{u}{2}\right)-2\tilde{g}_{k+1} \left(\frac{u}{2}\right)+\tilde{g}_{k+2} \left(\frac{u}{2}\right).
\end{multline*}
By lemma \ref{l:diff2_asymp}, $\Delta^{(2)}  g^{2H}_{k-2,1} = O \left(\frac{1}{k^{4-2H}}\right)$. Next, let $v = \frac{u}{2}$. Fix $q \in(1/2;1), k_0 \in \Nd, \epsilon >0$ such that:
\begin{itemize}
\item $q(1+\epsilon)<1$;
\item $\forall j \in \{4,5, \dots \}, \forall k \in \{k_0, \dots,n-2\}\ \Delta^{(2)}g_{k,1}^{j} \leq \frac{K_\epsilon(1+\epsilon)^j}{k^{4-j}}$; 
\end{itemize}
with the last being possible due to lemma \ref{l:diff2_asymp}. Since $u \in [0; n^{-1}]$, $v(k+4) \leq \frac{n+4}{2n} \leq q$ provided $n$  is large enough. Consequently, $\forall k \in \{k_0, \dots, n-2\}$ and some $\theta^{\epsilon}_{j,k} \in [-1;1]$,
\begin{multline*}
	\tilde{g}_k \left(\frac{u}{2}\right)-2\tilde{g}_{k+1} \left(\frac{u}{2}\right)+\tilde{g}_{k+2} \left(\frac{u}{2}\right) = \\
    (1+(k+4)v)^{2H}-4(1+(k+3)v)^{2H}+6(1+(k+2)v)^{2H}-4(1+(k+1)v)^{2H}+(1+k)^{2H} =\\ \sum_{j=0}^{\infty}\binom{2H}{j}v^{j}\Delta^{(2)} g^{j}_{k,1} = [\text{since }\forall k \ \Delta^{(2)} g^{j}_{k,1} = 0, \text{ for }j=0,1,2,3] =\\
    K_\epsilon
    \frac{(v(1+\epsilon))^{2H}}{k^{4-2H}}\sum_{j=4}^{\infty}\binom{2H}{j}(vk(1+\epsilon))^{j-2H}\theta^\epsilon_{j,k}=\frac{u^{2H}}{k^{4-2H}}O(1).
\end{multline*}
Since $k \in \{2,\dots, k_0\}\imply\abs{\Delta^{(2)} g^j_{k,1}}\leq 16(k_0+4)^j$, using the same expressions as above,
\begin{equation*}
	\left\vert \tilde{g}_k \left(\frac{u}{2}\right)-2\tilde{g}_{k+1} \left(\frac{u}{2}\right)+\tilde{g}_{k+2} \left(\frac{u}{2}\right)\right\vert \leq 16 \sum_{j = 4}^{\infty} \binom{2H}{j}(v(k_0+4))^j = u^{4} O(1).
\end{equation*}
Hence, for all $k \in \{2,\dots,n\}$,
\begin{equation*}
	\left \vert\Delta^{(2)} \tilde{p}_{ku,u} - 2(1+u)^{2H}\Delta^{(2)} \tilde{p}_{(k-1)\frac{u}{1+u},\frac{u}{1+u}}+(1+2u)^{2H}\Delta^{(2)} \tilde{p}_{(k-2)\frac{u}{1+2u},\frac{u}{1+2u}}\right \vert \leq d\frac{u^{2H}}{k^{4-2H}},
\end{equation*}
with some constant $d$ independent of $k,n$.
\end{proof}

\bibliographystyle{alpha}

\end{document}